\newtheoremstyle{mystyleplain}
{20pt} 
{20pt} 
{\itshape} 
{}  
{\bf} 
{} 
{\newline} 
{\textbf{\thmname{#1}\thmnumber{ #2}\thmnote{ (#3)}}}
\theoremstyle{mystyleplain}
\newtheorem{proposition}{Proposition}
\newtheorem{lemma}{Lemma}
\newtheorem{theorem}{Theorem}
\newtheorem{corollary}{Corollary}
\renewcommand{\S}{\mathcal S}
\newcommand{\R}{\mathds R}
\newcommand{\N}{\mathds N}
\newcommand{\E}{\mathds E}
\renewcommand{\P}{\mathds P}
\DeclareMathOperator{\Var}{Var}
\numberwithin{equation}{section}
\def\@maketitle{%
  \newpage
  \vspace*{-\topskip}      
  \begingroup\centering    
  \let \footnote \thanks
  \hrule height \z@        
    {\LARGE \@title \par}%
    \vskip 1.5em 
    {\large
      \lineskip .5em 
      \begin{tabular}[t]{c}%
        \@author
      \end{tabular}\par}%
    \vskip 1em 
    {\large \@date}%
  \par\endgroup            
  \vskip 1.5em             
}
\begin{document}
\title{A universal expectation bound on empirical projections of deformed random matrices}
\author{Kamil Jurczak%
		\thanks{E-MAIL: kamil.jurczak@ruhr-uni-bochum.de}}
\affil{Ruhr-Universit\"at Bochum}

\maketitle
\pagestyle{myheadings}

\begin{abstract}
\noindent Let $C$ be a real-valued $M\times M$ matrix with singular values $\lambda_1\ge...\ge\lambda_M$ and $E$ a random matrix of centered i.i.d. entries with finite fourth moment. In this paper we give a universal upper bound on the expectation of $||\hat\pi_rX||_{S_2}^2-||\pi_rX||^2_{S_2}$, where $X:=C+E$ and $\hat\pi_r$ (resp. $\pi_r$) is a rank-$r$ projection maximizing the Hilbert-Schmidt norm $||\tilde\pi_rX||_{S_2}$ (resp. $||\tilde\pi_rC||_{S_2}$) over the set $\S_{M,r}$ of all orthogonal rank-$r$ projections. This result is a generalization of a theorem for Gaussian matrices due to \cite{Rohde2012}. Our approach differs substantially from the techniques of the mentioned article. We analyze $||\hat\pi_rX||_{S_2}^2-||\pi_rX||^2_{S_2}$ from a rather deterministic point of view by an upper bound on $||\hat\pi_rX||_{S_2}^2-||\pi_rX||^2_{S_2}$, whose randomness is totally determined by the largest singular value of $E$.    
\end{abstract}

\section{Introduction}

\thispagestyle{empty}
Let $C$ be a real-valued $M\times M$ matrix, $M\in\N$, with singular values $\lambda_k=\lambda_k(C),~k=1,...,M,$ in decreasing order and $E$ a $M\times M$ random matrix, whose entries are centered i.i.d. real-valued random variables with variance $\sigma^2>0$. We denote the singular values of $E$ by $\sigma_1\ge...\ge\sigma_M$. Further let $\pi_r$ be a rank-$r$ projection, which maximizes the Hilbert-Schmidt norm $||\tilde{\pi}_rC||_{S_2}$ over the set $\S_{M,r}$ of all orthogonal rank-$r$ projections into subspaces of $\R^M$.\\
Consider the process $(Z_{\tilde{\pi}_r})_{\tilde{\pi}_r\in\S_{M,r}}$ defined by
\begin{align}
Z_{\tilde{\pi}_r}:=||\tilde{\pi}_rX||_{S_2}^2-||\pi_rX||_{S_2}^2, ~X:=C+E, \label{eqn:Z}
\end{align}
and its supremum denoted by
\begin{align}
Z_{\hat\pi_r}=\sup_{\tilde{\pi}_r\in \mathcal{S}_{M,r}}Z_{\tilde{\pi}_r}, 
\end{align}
where $\hat\pi_r$ is a location of the supremum. In general, $\hat\pi_r$ is not unique, since the distribution of the entries is allowed to have point masses.\\
\noindent 
In statistics one is often not interested to recover the whole matrix $C$ from a measurement $X$ but a low rank approximation containing most of its information. Clearly, for a fixed rank $r$ a ``best'' rank-$r$ approximation is $\pi_r C$ since it minimizes the Hilbert-Schmidt norm $||C-C_r||_{S_2}$ over all $M\times M$ matrices $C_r$ of rank $r$. A natural quantity to find a rank $r$, such that $\pi_rC$ contains sufficient information about $C$, is
\begin{align}
\arg\underset{r\ge 1}{\min}\left\{\frac{||\pi_rC||_{S_2}^2}{||C||_{S_2}^2}\ge\alpha \right\}, \label{eqn:ransel}
\end{align}
where $\alpha\in(0,1]$ is a tuning parameter, which specifies the accuracy of the approximation. The term accuracy is appropriate since for any $r$
\begin{align}||C-\pi_rC||_{S_2}^2=||C||_{S_2}^2-||\pi_rC||_{S_2}^2. \label{eqn:rannor}\end{align}
Clearly, for $\alpha=1$ the expression (\ref{eqn:ransel}) attains the rank of the matrix $C$.
To study quantities like (\ref{eqn:ransel}) or the right-hand side of (\ref{eqn:rannor}) we require an estimate of $||\pi_rC||_{S_2}^2=\sum_{i=1}^r\lambda_i^2$.
Within our model the statistics $||\pi_rX||_{S_2}^2-\sigma^2rM$ is an unbiased estimator for $||\pi_rC||_{S_2}^2$. Since $||\pi_rX||_{S_2}^2-\sigma^2rM$ bases on $\pi_r$, which is typically unknown in advance, naturally the question arises whether the empirical counterpart $||\hat\pi_rX||_{S_2}^2-\sigma^2rM$ is a good alternative estimator. This question may be answered by the study of the expression $\E Z_{\hat\pi_r}$. For a more detailed discussion of the statistical motivation for this problem see \cite{Rohde2012}.\\

\noindent \cite{Rohde2012} investigates the accuracy of empirical reduced-rank projection in case of a Gaussian noise matrix $E$ by upper and lower bounds on $\E Z_{\hat\pi_r}$. The proofs in the mentioned article rely heavily on the Gaussian distribution of $E$. In particular, the main ingredients for the upper bound are among others $S_2$-$S_\infty$-chaining and the \cite{Borell1975} - \cite{Sudakov1974} inequality. Since $Z$ is not centered, the clue of the paper is a slicing argument for $\S_{M,r}$ to proceed to centered Gaussian processes on well-chosen slices. Beyond, for the proofs of lower bounds on $\E Z_{\hat\pi_r}$ the invariance property of the distribution of $E$ under orthogonal transformation and Sudakov's minoration are used. Due to the dependence of the proofs on the Gaussian distribution, naturally the question arises whether the results of \cite{Rohde2012} hold for a larger class of probability distributions of the independent entries $E_{ij}$. Before we pursue this question, we first recapitulate the upper and lower bounds from \cite{Rohde2012}.\\  
\noindent In the following results and the entire article $\lesssim $ means that the left hand side is equal or less than the right  one up to some positive multiplicative constant which does not depend on the variable parameters in the expression. Moreover we denote the projection on the space formed by the first $s$ standard basis vectors of $\R^M$ by $Id_s$.  
\begin{theorem}[Upper bound for Gaussian matrices]
Under the former assumptions and notations let the distribution of $E_{ij}$ be centered Gaussian with variance $\sigma^2$ and $\mathrm{rank}(C)\ge r$. Then in case of $r\le M/2$ the following bound holds
\begin{align}
\E Z_{\hat\pi_r}\lesssim \sigma^2rM\Bigg( \min\Bigg( \frac{\lambda_1^2}{\lambda_r^2},~&1+\frac{\lambda_1}{\sigma\sqrt{M}} \Bigg) \notag \\
&+\min\Bigg(\Bigg(\frac{\frac{1}{r}\sum_{i=r+1}^{2r}\lambda_i^2}{\lambda_r^2}\Bigg)^{\frac{1}{2}}\cdot\frac{\lambda_1}{\sigma\sqrt{M}},~\frac{\lambda_1^2}{\lambda_r^2-\lambda_{r+1}^2} \Bigg) \Bigg),
\end{align}
where $\frac{\lambda_1^2}{\lambda_r^2-\lambda_{r+1}^2}$ is set to infinity, if $\lambda_r=\lambda_{r+1}$.
\end{theorem}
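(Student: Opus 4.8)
\emph{Proof idea.} The plan is to work with the reformulation $Z_{\tilde\pi_r}=\langle\tilde\pi_r-\pi_r,XX^\top\rangle_{S_2}$ — legitimate because $\tilde\pi_r$ and $\pi_r$ are symmetric idempotent, so $\langle\tilde\pi_r,XX^\top\rangle_{S_2}=\|\tilde\pi_rX\|_{S_2}^2$ — and to split $XX^\top$ according to its dependence on $E$. Since $\langle\tilde\pi_r-\pi_r,I_M\rangle_{S_2}=0$ this gives
\begin{align*}
Z_{\tilde\pi_r}=\underbrace{\langle\tilde\pi_r-\pi_r,CC^\top\rangle_{S_2}}_{=:\,D_{\tilde\pi_r}\ \le\ 0}\;+\;\underbrace{2\,\langle(\tilde\pi_r-\pi_r)C,\,E\rangle_{S_2}}_{=:\,G_{\tilde\pi_r}}\;+\;\underbrace{\langle\tilde\pi_r-\pi_r,\,EE^\top-\sigma^2MI_M\rangle_{S_2}}_{=:\,Q_{\tilde\pi_r}},
\end{align*}
where $D_{\tilde\pi_r}\le0$ because $\pi_r$ maximises $\langle\cdot,CC^\top\rangle_{S_2}$ over $\mathcal S_{M,r}$, $(G_{\tilde\pi_r})$ is a centered Gaussian process with $G_{\pi_r}=0$, and $\E Q_{\tilde\pi_r}=0$. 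Bounding the supremum of the sum by the sum of the suprema leaves $\E\sup_{\tilde\pi_r}(D_{\tilde\pi_r}+G_{\tilde\pi_r})$ and $\E\sup_{\tilde\pi_r}Q_{\tilde\pi_r}$ to control. The second is immediate: $\sup_{\tilde\pi_r}Q_{\tilde\pi_r}=\sum_{i=1}^r\sigma_i^2-\|\pi_rE\|_{S_2}^2$, whose expectation is at most $r\,\E\sigma_1^2-\sigma^2rM\lesssim\sigma^2rM$ by the edge bound $\E\sigma_1^2\lesssim\sigma^2M$ for Gaussian $E$; this already supplies the summand $1$ of the first minimum (which, as $\lambda_1\ge\lambda_r$, never drops below $1$).

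For the mixed part I would first record the geometry. Writing $\langle\tilde\pi_r,CC^\top\rangle_{S_2}=\sum_i\lambda_i^2p_i$ with $p_i=\|\tilde\pi_rv_i\|^2\in[0,1]$, $\sum_ip_i=r$, for the eigenbasis $(v_i)$ of $CC^\top$, and using $\|\tilde\pi_r-\pi_r\|_{S_2}^2=2\sum_{i\le r}(1-p_i)$, one obtains with $t:=\|\tilde\pi_r-\pi_r\|_{S_2}$
\begin{align*}
D_{\tilde\pi_r}\ \le\ -\tfrac{1}{2}\bigl(\lambda_r^2-\lambda_{r+1}^2\bigr)\,t^2, \qquad \Var\bigl(G_{\tilde\pi_r}\bigr)=4\sigma^2\|(\tilde\pi_r-\pi_r)C\|_{S_2}^2\ \le\ 4\sigma^2\lambda_1^2\,t^2,
\end{align*}
together with sharper versions of the first inequality in which $\lambda_{r+1}^2t^2$ is replaced by the exact gain $\sum_{i>r}\lambda_i^2p_i$, controlled through $\tfrac{1}{r}\sum_{i=r+1}^{2r}\lambda_i^2$. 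Since $\mathcal S_{M,r}$ is a Grassmannian of dimension $r(M-r)\le rM$ with covering numbers $\log N(\mathcal S_{M,r},\|\cdot\|_{S_2},\varepsilon)\lesssim rM\log(c\sqrt r/\varepsilon)$, Dudley's entropy bound applied to the centered process $G$ (with $G_{\pi_r}=0$), using the variance estimate and the diameter $t\le\sqrt{2r}$, gives $\E\sup_{\tilde\pi_r}G_{\tilde\pi_r}\lesssim\sigma\lambda_1\sqrt r\cdot\sqrt{rM}=\sigma^2rM\cdot\tfrac{\lambda_1}{\sigma\sqrt M}$, which together with $D\le0$ is the other summand of the first minimum.

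To reach the finer alternatives I would then run the slicing device. Cut $\mathcal S_{M,r}$ into the dyadic shells $\mathcal S^{(j)}=\{\tilde\pi_r:2^{j-1}\varepsilon_0\le\|\tilde\pi_r-\pi_r\|_{S_2}<2^j\varepsilon_0\}$, $j=0,\dots,J$, $J\asymp\log_2(\sqrt r/\varepsilon_0)$. On $\mathcal S^{(j)}$ the centered process $G$ has supremum-variance $\lesssim\sigma^2\lambda_1^2\,4^j\varepsilon_0^2$ and lies in a set of $\|\cdot\|_{S_2}$-diameter $\lesssim2^j\varepsilon_0$, so Dudley gives $\E\sup_{\mathcal S^{(j)}}G\lesssim\sigma\lambda_1\sqrt{rM}\,2^j\varepsilon_0$, whereas $\sup_{\mathcal S^{(j)}}D\le-\tfrac{1}{8}(\lambda_r^2-\lambda_{r+1}^2)4^j\varepsilon_0^2$; the Borell--Sudakov--Tsirelson inequality controls the fluctuation of each $\sup_{\mathcal S^{(j)}}G$ about its mean, and since those tails decay geometrically in $j$ the union over the $J$ shells costs only a lower-order additive term. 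Maximising $-ax^2+bx$ over $x=2^j$ with $a\asymp(\lambda_r^2-\lambda_{r+1}^2)\varepsilon_0^2$ and $b\asymp\sigma\lambda_1\sqrt{rM}\,\varepsilon_0$ yields $b^2/(4a)\asymp\tfrac{\sigma^2\lambda_1^2rM}{\lambda_r^2-\lambda_{r+1}^2}$, independently of $\varepsilon_0$; this is the second alternative of the second minimum, $+\infty$ precisely when $\lambda_r=\lambda_{r+1}$, as in the statement. Feeding the refined bound on $D$ through the same optimisation gives the $\tfrac{1}{\lambda_r}\bigl(\tfrac{1}{r}\sum_{i=r+1}^{2r}\lambda_i^2\bigr)^{1/2}\tfrac{\lambda_1}{\sigma\sqrt M}$ alternative, and estimating $\sum_{i=1}^r\sigma_i(X)^2$ in the strong-signal regime by a perturbation expansion — in which the leading singular values of $X$ react to $E$ only through its action on the fixed top-$r$ singular subspaces of $C$, not through $\sigma_1\asymp\sigma\sqrt M$ — gives the $\lambda_1^2/\lambda_r^2$ alternative of the first minimum. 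The theorem follows by combining these bounds and taking the minimum (the hypothesis $r\le M/2$ entering through the Grassmannian geometry).

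The crux — and the reason Gaussianity is exploited — is precisely that $Z$ is not centered: it carries both the non-positive drift $D$ and the positive drift from $Q$, so neither Gaussian concentration nor Gaussian comparison applies to $Z$ itself. The slicing is what localises the problem to genuinely centered Gaussian processes of small, shell-dependent variance; the laborious part is then the bookkeeping — choosing and summing over the shells, balancing $-D$ against Dudley's bound on $G$ shell by shell, handling the cross-shell fluctuations via Borell--Sudakov--Tsirelson, and verifying that the resulting family of estimates really does assemble into the stated pair of minima, which is where the finer-than-Weyl perturbation bound for the top singular values of $X$ is needed.
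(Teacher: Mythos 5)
Your proposal is essentially a reconstruction of the route of \cite{Rohde2012}, which is precisely the route this paper sets out to avoid. The decomposition you start from is the same as the paper's ($D+G=Z^1$, $Q=Z^2$, and the crude bound $\E\sup Q\lesssim r\,\E\sigma_1^2\lesssim\sigma^2rM$ matches the paper's treatment of $Z^2$), but for the supremum of the non-centered part $D+G$ the paper does not use any Gaussian process theory. Instead it bounds the cross term \emph{surely} by the deterministic inequality $2\,\mathrm{tr}(E^T(\tilde\pi_r-\pi_r)C)\le 2\sqrt{2r_M}\,\sigma_1\lambda_1\,\|\tilde\pi_r-\pi_r\|_{S_2}$ (Lemma 1), combines it with the two drift estimates of Lemma 2, and optimizes the resulting quadratic in $t=\|\tilde\pi_r-\pi_r\|_{S_2}$ pathwise; this yields a random upper bound $Y$ on $Z^1_{\hat\pi^1_r}$ depending on $E$ only through $\sigma_1$ (Proposition 1), after which one only needs $\E\sigma_1\lesssim\sqrt{M}(\sigma+\sqrt[4]{m_4})$ and $\E\sigma_1^2\lesssim M(\sigma^2+\sqrt{m_4})$ (Theorem 4, Lata{\l}a). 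Since $\sqrt{r_M}\,\E\sigma_1\asymp\sigma\sqrt{rM}$ in the Gaussian case, this loses nothing in order against your Dudley bound $\E\sup G\lesssim\sigma\lambda_1\sqrt{rM}\cdot t$, so the entire apparatus of covering numbers, dyadic shells and Borell--Sudakov--Tsirelson is replaced by one Cauchy--Schwarz argument. What your approach buys is Gaussian-specific sharpness (and the tools one actually needs for the lower bounds of Theorem 2); what the paper's approach buys is universality --- only a finite fourth moment is required --- and a far shorter proof. The final bookkeeping (showing that the three alternatives assemble into the stated sum of two minima) appears in both routes and is carried out at the end of Section 4.

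Two points in your sketch would need repair before it is a proof. First, Dudley's bound on a shell of diameter $2^j\varepsilon_0$ with entropy $rM\log(c\sqrt r/\varepsilon)$ produces an extra factor $\sqrt{\log(c\sqrt r/(2^j\varepsilon_0))}$ that does not appear in the theorem; removing it requires either generic chaining together with the homogeneity of the Grassmannian or a direct comparison to the Gaussian width of a Euclidean ball, and this is exactly the laborious part you defer. Second, your derivation of the $\lambda_1^2/\lambda_r^2$ alternative via a ``perturbation expansion of the top singular values of $X$'' is not substantiated and is not how either proof obtains it: in both the paper and in \cite{Rohde2012} this term comes from the second drift estimate $\|\tilde\pi_rC\|_{S_2}^2-\|\pi_rC\|_{S_2}^2\le-\tfrac12\lambda_r^2\|\tilde\pi_r-\pi_r\|_{S_2}^2+\Delta_r$ (Lemma 2(ii)), valid outside the ball $\|\tilde\pi_r-\pi_r\|_{S_2}<\lambda_r^{-1}\sqrt{2\Delta_r}$, balanced against the cross term; the quadratic-in-$t$ optimization then yields $r_M\sigma_1^2\lambda_1^2/\lambda_r^2+\Delta_r\lesssim r_M\sigma_1^2\lambda_1^2/\lambda_r^2$ in the relevant regime. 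You should route that alternative through the same drift-versus-fluctuation balance rather than through spectral perturbation theory.
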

\begin{theorem}[Lower bounds for Gaussian matrices]
Let $E_{ij}$ be centered Gaussian with variance $\sigma^2$.\newline
(i) Let $\lambda_1=...=\lambda_M=\alpha$, then
\begin{align}
\E Z_{\hat\pi_r}\ge \E \Big(\underset{\tilde{\pi}_r\in\S_{M,r}}{\sup}||\tilde\pi_rE||_{S_2}^2-||\pi_rE||_{S_2}^2\Big) \label{eqn:ind1}
\end{align} 
and for $r\le M/2$
\begin{align}
\underset{\alpha\to\infty}{\lim\inf}~\frac{\E Z_{\hat\pi_r}}{\alpha}\gtrsim \sigma r\sqrt{M-r}. \label{eqn:imp}
\end{align}
(ii) Denote 
\begin{align*}
Z^s_{\hat\pi_s}:=\underset{\tilde\pi_s\in\S_{M,r}}{\sup}||\tilde\pi_s \left(C_{\alpha,s}+E\right)||_{S_2}^2-||\pi_s \left(C_{\alpha,s}+E\right)||_{S_2}^2,~1\le s<M,
\end{align*}
where the singular value decomposition of $C_{\alpha,s}$ is given by $U\alpha Id_s V'$, $\alpha>0$. Then it holds
\begin{align}
\underset{\alpha\to\infty}{\lim\inf}\underset{s\in\{r,M-r\}}{\max} \E Z^s_{\hat\pi_s} \gtrsim \sigma^2r(M-r). \label{eqn:ind2}
\end{align}
(iii) Let r=1. There exists an $M_0\in\N$ such that for all $\sigma^2>0$ and any $M\ge M_0$ it holds
\begin{align}
\underset{C\in\R^{M\times M}}{\inf} \E Z_{\hat\pi_r} \gtrsim \E \Big(\underset{\tilde{\pi}_r\in\S_{M,r}}{\sup}||\tilde\pi_rE||_{S_2}^2-||\pi_rE||_{S_2}^2\Big). \label{eqn:ind3}
\end{align}
\end{theorem}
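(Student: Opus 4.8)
The plan is to exploit, in all three parts, the trivial bound $Z_{\hat\pi_r}\ge Z_{\tilde\pi_r^{\ast}}$ for a well-chosen (typically data-dependent) $\tilde\pi_r^{\ast}\in\S_{M,r}$, together with the two symmetries of a centered Gaussian noise matrix: its law is invariant under $E\mapsto O_1EO_2$ for orthogonal $O_1,O_2$, and under $E\mapsto-E$. In the expansion $\|\tilde\pi_rX\|_{S_2}^2=\|\tilde\pi_rC\|_{S_2}^2+2\langle\tilde\pi_rC,E\rangle_{S_2}+\|\tilde\pi_rE\|_{S_2}^2$ the first symmetry is used to normalise $C$, and the second to make $\E\langle\tilde\pi_r^{\ast}C,E\rangle_{S_2}=0$ whenever $\tilde\pi_r^{\ast}$ depends on $E$ only through $EE'$. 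For \textbf{(i)} orthogonal invariance reduces the case $\lambda_1=\dots=\lambda_M=\alpha$ to $C=\alpha\,Id_M$, for which every rank-$r$ projection is $C$-optimal and $\|\tilde\pi_rC\|_{S_2}^2\equiv r\alpha^2$; take $\pi_r=Id_r$. Choosing $\tilde\pi_r^{\ast}$ to be a maximiser of $\tilde\pi_r\mapsto\|\tilde\pi_rE\|_{S_2}$ gives $Z_{\hat\pi_r}\ge 2\alpha\bigl(\operatorname{tr}(\tilde\pi_r^{\ast}E)-\operatorname{tr}(Id_rE)\bigr)+\bigl(\sup_{\tilde\pi_r}\|\tilde\pi_rE\|_{S_2}^2-\|Id_rE\|_{S_2}^2\bigr)$; since $\tilde\pi_r^{\ast}$ depends on $E$ only through $EE'$, the first term has zero expectation by $E\mapsto-E$ and $\E\operatorname{tr}(Id_rE)=0$, which is (\ref{eqn:ind1}). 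For the rate (\ref{eqn:imp}) replace $\tilde\pi_r^{\ast}$ by the spectral projection onto the $r$ leading eigenvectors of $\operatorname{sym}(E):=\tfrac12(E+E')$, so that $\operatorname{tr}(\tilde\pi_r^{\ast}E)=\sum_{i=1}^{r}\mu_i$ with $\mu_1\ge\dots\ge\mu_M$ the eigenvalues of $\operatorname{sym}(E)$; dividing the corresponding bound by $\alpha$, letting $\alpha\to\infty$ (the $\|\cdot\|_{S_2}^2$-terms are $O(1)$ and drop out), and using Fatou's lemma with $Z_{\hat\pi_r}\ge Z_{\pi_r}=0$ gives $\liminf_{\alpha\to\infty}\E Z_{\hat\pi_r}/\alpha\ge 2\,\E\sum_{i=1}^{r}\mu_i$. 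It then remains to show $\E\sum_{i=1}^{r}\mu_i\gtrsim\sigma r\sqrt{M-r}$ for $r\le M/2$ --- a standard fact for Gaussian Wigner matrices, obtainable from concentration of $\mu_1$ and of $\|\operatorname{sym}(E)\|_{S_1}$ together with the elementary inequality $\tfrac1r\sum_{i\le r}\mu_i\ge\tfrac2M\sum_{i\le M/2}\mu_i$ to interpolate over $r$.

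For \textbf{(ii)}, orthogonal invariance reduces $C_{\alpha,s}$ to $\alpha\,Id_s$, whence $\pi_s=Id_s$ and $\|\tilde\pi_s(\alpha\,Id_s+E)\|_{S_2}^2=\alpha^2\operatorname{tr}(\tilde\pi_s Id_s)+2\alpha\langle\tilde\pi_s Id_s,E\rangle_{S_2}+\|\tilde\pi_sE\|_{S_2}^2$. Parametrise a neighbourhood of $Id_s$ in $\S_{M,s}$ by the block $K\in\R^{(M-s)\times s}$, writing $P(K)$ for the projection onto the column span of $\left(\begin{smallmatrix}I_s\\K\end{smallmatrix}\right)$; then $s-\operatorname{tr}(P(K)Id_s)=\|K\|_{S_2}^2+O(\|K\|^4)$ and $\langle P(K)Id_s,E\rangle_{S_2}-\langle Id_s,E\rangle_{S_2}=\langle K,E_{\mathrm{BL}}\rangle+O(\|K\|^2)$, where $E_{\mathrm{BL}}$ is the bottom-left $(M-s)\times s$ block of $E$. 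Evaluating $Z$ at $\tilde\pi_s^{\ast}=P(E_{\mathrm{BL}}/\alpha)$ (which tends to $Id_s$ as $\alpha\to\infty$) gives $Z^s_{\hat\pi_s}\ge-\|E_{\mathrm{BL}}\|_{S_2}^2+2\|E_{\mathrm{BL}}\|_{S_2}^2+o(1)$, hence $\liminf_{\alpha\to\infty}Z^s_{\hat\pi_s}\ge\|E_{\mathrm{BL}}\|_{S_2}^2$ a.s.; Fatou's lemma (using $Z^s_{\hat\pi_s}\ge0$) then yields $\liminf_{\alpha\to\infty}\E Z^s_{\hat\pi_s}\ge\E\|E_{\mathrm{BL}}\|_{S_2}^2=\sigma^2s(M-s)$, and since $s(M-s)=r(M-r)$ for both $s=r$ and $s=M-r$, this is (\ref{eqn:ind2}).

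For \textbf{(iii)}, first rewrite both sides. Since $\E\,u_1'(C+E)(C+E)'u_1=\lambda_1^2+M\sigma^2$ one has $\E Z_{\hat\pi_1}=\E\|C+E\|_{S_\infty}^2-\lambda_1^2-M\sigma^2$, while the right-hand side of (\ref{eqn:ind3}) equals $\E\|E\|_{S_\infty}^2-M\sigma^2$, which lies in $[2M\sigma^2,4M\sigma^2]$ for $M\ge M_0$ by standard control of the operator norm of a square Gaussian matrix; hence one must prove $\E\|C+E\|_{S_\infty}^2\ge\lambda_1^2+(1+c)M\sigma^2$ uniformly in $C$. I would split on $\lambda_1=\|C\|_{S_\infty}$. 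If $\lambda_1\le\varepsilon\sigma\sqrt M$, the bounds $\|C+E\|_{S_\infty}\ge\|E\|_{S_\infty}-\lambda_1$ and $u_1'(C+E)(C+E)'u_1\le(\lambda_1+\|E'u_1\|)^2$, with $\E\|E\|_{S_\infty}\ge(2-o(1))\sigma\sqrt M$ and $\E\|E'u_1\|^2=M\sigma^2$, leave a surplus of order $(3-O(\varepsilon)-o(1))M\sigma^2$. If $\lambda_1>\varepsilon\sigma\sqrt M$, use the exact Schur-complement identity $\|C+E\|_{S_\infty}^2=\|(C+E)v_1\|^2+c'(\lambda^{\ast}I-D)^{-1}c$, with $v_1$ the leading right singular vector of $C$, $c=(I-v_1v_1')(C+E)'(C+E)v_1$, $D=(I-v_1v_1')(C+E)'(C+E)(I-v_1v_1')$ and $\lambda^{\ast}=\|C+E\|_{S_\infty}^2>\|D\|_{S_\infty}$; this gives $\|C+E\|_{S_\infty}^2-\|(C+E)v_1\|^2\ge\|c\|^2/\|C+E\|_{S_\infty}^2$, and since $c$ carries the term $\lambda_1(I-v_1v_1')E'u_1$ of size $\asymp\lambda_1\sigma\sqrt M$ one expects $\E\|c\|^2\gtrsim\lambda_1^2M\sigma^2$, which combined with $\|C+E\|_{S_\infty}^2\lesssim(\lambda_1+\sigma\sqrt M)^2$ yields the surplus $\gtrsim M\sigma^2$.

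\textbf{The main obstacle.} After the normalisations, (i) and (ii) are essentially computations; the only substantive input is the Wigner estimate $\E\sum_{i\le r}\mu_i\gtrsim\sigma r\sqrt{M-r}$ of (i), which is a factor $\sqrt r$ stronger than Sudakov's minoration on $\S_{M,r}$ and therefore needs the eigenvalue-sum bound rather than a bare $S_2$--$S_\infty$ chaining argument. The genuine difficulty is the moderate-signal window $\lambda_1\asymp\sigma\sqrt M$ of (iii): there $\|C+E\|_{S_\infty}\ge\|E\|_{S_\infty}-\lambda_1$ is too weak, and in the Schur-complement route the self-interaction term $(I-v_1v_1')E'Ev_1$ appearing inside $c$ is itself of order $M\sigma^2$ --- comparable to $\lambda_1\sigma\sqrt M$ and to the surplus sought --- so establishing $\E\|c\|^2\gtrsim\lambda_1^2M\sigma^2$ uniformly over all admissible spectra of $C$ (in particular when several $\lambda_k$ are comparable to $\lambda_1$, so $v_1$ is nearly a degenerate eigenvector) requires a careful conditioning argument rather than a one-line estimate. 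I expect this regime to be the crux of the whole theorem.
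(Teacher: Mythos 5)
First, a structural caveat: the paper contains no proof of this statement. It is Theorem~2, recapitulated from \cite{Rohde2012} for motivation; the article only remarks that the original proof of the lower bounds uses orthogonal invariance of the Gaussian law and Sudakov's minoration, and then proves a different theorem. So your proposal can only be measured against that description and on its own merits. Parts (i) and (ii) are sound and take a more explicit route than the one attributed to \cite{Rohde2012}. For (\ref{eqn:ind1}) the reduction to $C=\alpha\, Id_M$ by bi-orthogonal invariance, the competitor $\tilde\pi_r^{\ast}$ maximising $\|\tilde\pi_rE\|_{S_2}$ (a function of $EE^T$ only) and the sign flip $E\mapsto-E$ to annihilate the cross term are correct. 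For (\ref{eqn:imp}) you reduce everything to $\E\sum_{i\le r}\mu_i\gtrsim\sigma r\sqrt{M}$ for the symmetrised Gaussian matrix, which does follow from the monotonicity of $\tfrac1r\sum_{i\le r}\mu_i$ in $r$ together with $\E\|\tfrac12(E+E^T)\|_{S_1}\gtrsim\sigma M^{3/2}$; this replaces Sudakov's minoration by a Wigner-type trace estimate and directly captures the factor $r$ rather than $\sqrt r$. Your (ii) actually proves the stronger assertion that the bound holds for each $s\in\{r,M-r\}$ separately: the expansion of $P(K)$ around $Id_s$, the choice $K=E_{\mathrm{BL}}/\alpha$, and Fatou via $Z^s_{\hat\pi_s}\ge Z^s_{\pi_s}=0$ are all in order.

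The genuine gap is part (iii). The reformulation $\E Z_{\hat\pi_1}=\E\|C+E\|_{S_\infty}^2-\lambda_1^2-M\sigma^2$ is correct and the extreme regimes $\lambda_1\le\varepsilon\sigma\sqrt M$ and $\lambda_1\gg\sigma\sqrt M$ are plausible as sketched, but in the window $\lambda_1\asymp\sigma\sqrt M$ you only assert that ``one expects'' $\E\|c\|^2\gtrsim\lambda_1^2M\sigma^2$, while conceding that the self-interaction term $(I-v_1v_1^T)E^TEv_1$ inside $c$ is of the same order $M\sigma^2$ as the surplus you need to extract, and that the Schur-complement denominator is not controlled when several singular values of $C$ are nearly degenerate. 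That uniform-in-$C$ estimate is the entire content of (iii) in this regime, and no argument for it is given; as written, (iii) is a programme with an explicitly acknowledged unresolved crux, not a proof.
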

\noindent (\ref{eqn:ind1}), (\ref{eqn:ind2}) and (\ref{eqn:ind3}) indicate that there does not exist a more favorable matrix than $C=0$ in terms of accuracy of $||\hat\pi_rX||_{S_2}$ for $||\pi_rX||_{S_2}$. For $r=1$ this statement is proven. (\ref{eqn:imp}) shows that in general the upper bound $\sigma^2rM(1+\frac{\lambda_1}{\sigma\sqrt{M}})$ is unimprovable. Nevertheless it is possible to state a more refined upper bound, as seen in Theorem 1.\newline 
In this article we generalize Theorem 1 to all random matrices of centered i.i.d. entries with finite fourth moment. Our approach differs significantly from \cite{Rohde2012}. The key argument is an upper bound on $Z_{\hat\pi_r}$, whose randomness is totally determined by $\sigma_1$. This enables us to use an upper bound on the expectation of the spectral radius of a centered random matrix with independent entries by \cite{Latala2005}. \\
\noindent In a broad sense we exploit the location $\hat\pi_r$ of the supremum of the process $Z$ to prove the main result of the article. The clue is that $Z$ attains its supremum on a rather small $S_2$-ball depending on $\sigma_1$ for a well-behaved matrix $C$. Our upper bound on $Z_{\hat\pi_r}$ takes this into account.\newline
The main result of this article is the following:
\begin{theorem}[Universal upper bound]
Assume that the entries $E_{ij}$ of the random matrix $E$ have finite variance $\sigma^2$ and finite fourth moment $m_4$. In this case the following inequality holds
\begin{align}
\E Z_{\hat\pi_r}\lesssim r(M-r)\min (\mathord{\mathrm{I}},\mathord{\mathrm{II}},\mathord{\mathrm{III}}),\label{eqn:ErwAbs}
\end{align}
where
\begin{subequations}
\begin{align}
\mathord{\mathrm{I}}&=\sigma^2+\sqrt{m_4}+\frac{\lambda_1}{\sqrt{M}}\left(\sigma+\sqrt[4]{m_4}\right),\\
\mathord{\mathrm{II}}&=\begin{cases}\frac{\lambda_1^2}{\lambda_r^2-\lambda_{r+1}^2}\left(\sigma^2+\sqrt{m_4}\right) &\text{ if } \lambda_r>\lambda_{r+1},\\
 \infty &\text{ if } \lambda_r=\lambda_{r+1},
\end{cases}\\
\mathord{\mathrm{III}}&=\begin{cases}\frac{\lambda_1^2}{\lambda_r^2}\left(\sigma^2+\sqrt{m_4}\right)+\sqrt{\frac{\lambda_1^2\sum_{i=r+1}^{2r}\lambda_i^2}{r(M-r)\lambda_r^2}}\left(\sigma+\sqrt[4]{m_4}\right)
&\text{ if }\lambda_r>0,\\
\infty&\text{ if }\lambda_r=0.
\end{cases}
\end{align}
\end{subequations}
\end{theorem}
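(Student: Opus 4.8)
The plan is to first establish a \emph{deterministic} upper bound on $Z_{\hat\pi_r}$ in which the only randomness enters through the largest singular value $\sigma_1=\|E\|_{S_\infty}$ of $E$, and then to integrate it using the spectral--radius estimate of \cite{Latala2005}. Since every $\tilde\pi_r\in\S_{M,r}$ is an orthogonal projection, $\|\tilde\pi_r X\|_{S_2}^2=\mathrm{tr}(\tilde\pi_r XX')$, so with $X=C+E$,
\begin{align*}
Z_{\tilde\pi_r}=\mathrm{tr}\big((\tilde\pi_r-\pi_r)CC'\big)+2\,\mathrm{tr}\big((\tilde\pi_r-\pi_r)CE'\big)+\mathrm{tr}\big((\tilde\pi_r-\pi_r)EE'\big).
\end{align*}
Set $\delta=\delta(\tilde\pi_r):=\tfrac12\|\tilde\pi_r-\pi_r\|_{S_2}^2$; if $u_1,\dots,u_M$ are the left singular vectors of $C$ then $\delta=\sum_{i>r}\|\tilde\pi_ru_i\|^2=\sum_{i\le r}(1-\|\tilde\pi_ru_i\|^2)$, and $\tilde\pi_r-\pi_r$ has a traceless spectrum with eigenvalues $\pm s_1,\dots,\pm s_k$, $k\le\min(r,M-r)$, $\sum_j s_j^2=\delta$. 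The first term is non-positive by the defining optimality of $\pi_r$, and expanding it in the basis $(u_i)$ gives
\begin{align*}
\mathrm{tr}\big((\tilde\pi_r-\pi_r)CC'\big)=-\textstyle\sum_{i\le r}\lambda_i^2\big(1-\|\tilde\pi_ru_i\|^2\big)+\sum_{i>r}\lambda_i^2\|\tilde\pi_ru_i\|^2\le -\big(\lambda_r^2-\lambda_{r+1}^2\big)\delta,
\end{align*}
and, alternatively, $\le-\lambda_r^2\delta+\sum_{i=r+1}^{2r}\lambda_i^2$ by a Ky Fan bound on $\mathrm{tr}(\tilde\pi_r\sum_{i>r}\lambda_i^2u_iu_i')$.

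Next I would control the remaining two terms by $\sigma_1$ and $\delta$. For the cross term, $|\mathrm{tr}((\tilde\pi_r-\pi_r)CE')|\le\|(\tilde\pi_r-\pi_r)C\|_{S_1}\,\sigma_1$; writing $(\tilde\pi_r-\pi_r)C=(\tilde\pi_r-\pi_r)\pi_rC+\tilde\pi_r(I-\pi_r)C$ and using $\|(\tilde\pi_r-\pi_r)\pi_rC\|_{S_2}^2=\sum_{i\le r}\lambda_i^2(1-\|\tilde\pi_ru_i\|^2)\le\lambda_1^2\delta$, $\|\tilde\pi_r(I-\pi_r)C\|_{S_2}^2=\mathrm{tr}(\tilde\pi_r\sum_{i>r}\lambda_i^2u_iu_i')\le\sum_{i=r+1}^{2r}\lambda_i^2$, and bounding the Schatten--$1$ norms by $\sqrt{\mathrm{rank}}$ times the Schatten--$2$ norm (ranks $\lesssim\min(r,M-r)$), one obtains $|\mathrm{tr}((\tilde\pi_r-\pi_r)CE')|\lesssim\sigma_1\sqrt{\min(r,M-r)}\,\big(\lambda_1\sqrt\delta+\sqrt{\textstyle\sum_{i=r+1}^{2r}\lambda_i^2}\big)$. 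For the noise term, splitting $\tilde\pi_r-\pi_r$ into its positive and negative parts, $|\mathrm{tr}((\tilde\pi_r-\pi_r)EE')|\le\sigma_1^2\,\mathrm{tr}((\tilde\pi_r-\pi_r)^+)\le\sigma_1^2\min(r,M-r)$. Combining these and using $\delta\le\min(r,M-r)\le\frac{2r(M-r)}{M}$, the key structural point emerges: $Z_{\tilde\pi_r}$ can be nonnegative --- and hence $\hat\pi_r$ can be located --- only where $\delta$ is small, at a scale fixed by $\sigma_1$ and by the behaviour of the spectrum of $C$ near $\lambda_r$. Maximising the resulting quadratic in $\sqrt\delta$ over $\delta\in[0,\min(r,M-r)]$ produces a purely deterministic bound $Z_{\hat\pi_r}\le\Phi(\sigma_1)$: discarding all information about $C$ near $\lambda_r$ gives the estimate with $\mathrm I$ (this weak case also follows directly from Weyl's inequality for the appropriate tail of the spectrum of $X$ together with $\|CE'+EC'+EE'\|_{S_\infty}\le2\lambda_1\sigma_1+\sigma_1^2$); retaining the gap $\lambda_r^2-\lambda_{r+1}^2$ gives $\mathrm{II}$; retaining $\lambda_r^2$ together with the cross--term tail sum $\sum_{i=r+1}^{2r}\lambda_i^2$ gives $\mathrm{III}$.

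It then remains to integrate $\Phi(\sigma_1)$. By \cite{Latala2005} applied to the centered matrix $E$ with independent entries, $\E\sigma_1\lesssim\sqrt M\,(\sigma+\sqrt[4]{m_4})$, and a truncation argument (cutting each $E_{ij}$ at a level $\asymp\sqrt M\,(1+\sqrt[4]{m_4})$, applying a moment estimate to the bounded part and the Hilbert--Schmidt norm to the tail) upgrades this to $\E\sigma_1^2\lesssim M(\sigma^2+\sqrt{m_4})$, which is what the quadratic noise term requires. Substituting these, and using $\frac{\lambda_1^2}{\lambda_r^2-\lambda_{r+1}^2}\ge1$ and $\frac{\lambda_1^2}{\lambda_r^2}\ge1$ to absorb the contribution of the events on which the interior maximiser in $\delta$ is replaced by the boundary value $\delta=\min(r,M-r)$ --- e.g.\ on $\{\sigma_1^2>\lambda_r^2-\lambda_{r+1}^2\}$, where one further uses $\sigma_1\le\sigma_1^2/\sqrt{\lambda_r^2-\lambda_{r+1}^2}$ --- one obtains $\E Z_{\hat\pi_r}\lesssim r(M-r)\,\mathrm I$, $\E Z_{\hat\pi_r}\lesssim r(M-r)\,\mathrm{II}$ and $\E Z_{\hat\pi_r}\lesssim r(M-r)\,\mathrm{III}$, that is, \eqref{eqn:ErwAbs}. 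The case $r>M/2$ is reduced to $r\le M/2$ by passing to the complementary identity $Z_{\hat\pi_r}=\mathrm{tr}((I-\pi_r)XX')-\min_{\tilde\pi_{M-r}}\mathrm{tr}(\tilde\pi_{M-r}XX')$.

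I expect the deterministic step to be the main obstacle. Converting the single qualitative fact that $\hat\pi_r$ maximises $Z$ (equivalently $Z_{\hat\pi_r}\ge Z_{\pi_r}=0$) into a quantitative bound on the deviation $\delta(\hat\pi_r)$ requires estimating the cross and noise terms by the \emph{same} power of $\sqrt\delta$ while keeping the spectral constants sharp, and the whole construction must be arranged so that it does not degenerate when $\lambda_r=\lambda_{r+1}$, where $\pi_r$ is no longer unique and $\mathrm{II}=\infty$ --- which is precisely why three separate bounds, rather than one, are produced, and the bookkeeping that yields $\mathrm{III}$ with the correct factors is the most delicate piece. A secondary difficulty, specific to the generalisation beyond the Gaussian setting, is that only a fourth moment is assumed, so the second--moment estimate for $\sigma_1$ does not come directly from \cite{Latala2005} and must be established separately; and some care is needed to verify that the off--regime (boundary) contributions are genuinely dominated by the claimed right--hand side.
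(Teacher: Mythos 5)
Your overall strategy is the same as the paper's: split $Z$ into a drift term, a cross term and a pure noise term, derive a deterministic bound on $Z_{\hat\pi_r}$ whose only randomness enters through $\sigma_1$, and integrate with Latala's estimate. Your drift estimates are the paper's Lemma 2, your cross-term estimate is a variant of its Lemma 1, and your noise bound matches the paper's treatment of $Z^2$. (On the integration step the paper does not use a truncation argument: it simply records that Latala's proof already yields $\E\sigma_1^2\lesssim M(\sigma^2+\sqrt{m_4})$.) The bounds $\mathrm{I}$ and $\mathrm{II}$ go through essentially as you describe, with one proviso for $\mathrm{II}$: you must bound $\|\tilde\pi_r(Id-\pi_r)C\|_{S_2}^2$ by $\lambda_{r+1}^2\delta$ rather than by $\Delta_r$, because the resulting additive cross-term contribution $\sigma_1\sqrt{r_M\Delta_r}$ is linear in the noise level and is not dominated by $r(M-r)\,\mathrm{II}$, which is quadratic in it (send $\sigma,m_4\to 0$ with $C$ fixed).

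The genuine gap is in $\mathrm{III}$. Combining the drift bound $-\lambda_r^2\delta+\Delta_r$ with your cross and noise bounds and maximizing the quadratic in $\sqrt\delta$ over the whole range leaves an unconditional additive $+\Delta_r$ in $\Phi(\sigma_1)$. This term survives integration unchanged and is \emph{not} $\lesssim r(M-r)\,\mathrm{III}$: letting $\sigma,m_4\to 0$ with $C$ fixed and $\lambda_{r+1}>0$, the right-hand side $r(M-r)\,\mathrm{III}$ tends to zero while $\Delta_r$ stays bounded away from zero. (The theorem remains consistent, since $Z_{\hat\pi_r}\to 0$ in that limit; it is your intermediate bound that is too weak.) The paper closes exactly this hole with a slicing argument in the proof of Proposition 1, case $\mathrm{III}'$: it splits $\S_{M,r}$ at the radius $\|\tilde\pi_r-\pi_r\|_{S_2}=\lambda_r^{-1}\sqrt{2\Delta_r}$. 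Inside that ball the non-positive drift together with the linear cross term gives $4\sqrt{r_M\Delta_r}\,\tfrac{\lambda_1}{\lambda_r}\sigma_1$. Outside the ball one distinguishes whether the unconstrained maximizer of the quadratic lies in the region: if it does, then $\sqrt{2\Delta_r}\le 2\sqrt{2r_M}\tfrac{\lambda_1}{\lambda_r}\sigma_1$, so the $\Delta_r$ is dominated by $4r_M\tfrac{\lambda_1^2}{\lambda_r^2}\sigma_1^2$; if it does not, the restricted maximum sits on the boundary of the ball, where the $+\Delta_r$ is exactly cancelled by part of the drift. Some case distinction of this type --- a device coupling $\Delta_r$ to $\sigma_1$ --- is indispensable; you flag this step as the most delicate piece but do not supply it, and as written your derivation of $\mathrm{III}$ does not close.
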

\noindent This result is a generalization of Theorem 5.1 of \cite{Rohde2012} (resp. Theorem 1 stated above). We give a brief discussion of this fact later.\newline
The article is structured as follows. In the next section we introduce further notations. We give some elementary estimations on traces of certain matrices in the third section. Most of the results in this section are stated for deterministic matrices. In the fourth section a proof of Theorem 3 is given. Finally in the last section we give a further application of Proposition 1 of section 3. We derive intervals containing $\lim\inf_{M\to\infty} \lambda_1(C_M+E_M)$ and $\lim\sup_{M\to\infty} \lambda_1(C_M+E_M)$ almost surely, where $C_M$ is a deterministic $M\times M$ matrix and $E_M$ is a $M\times M$ random matrix of i.i.d entries with variance $\sigma^2M^{-1}$.

\section{Preliminaries}
We write $\text{tr}(C)$ for the trace of a matrix $C\in\R^{M\times M}$  and $C^T$ for its transpose.
\noindent  In the sequel we split $Z$ into two subprocesses $Z^1$ and $Z^2$ given by
\begin{align*}
Z_{\tilde\pi_r}^1&:=||\tilde\pi_r C||_{S_2}^2-||\pi_r C||_{S_2}^2+2\text{\upshape{tr}}(E^T(\tilde\pi_r-\pi_r )C),\\
Z_{\tilde\pi_r}^2&:=||\tilde\pi_r E||_{S_2}^2-||\pi_r E||_{S_2}^2.
\end{align*}
So it holds $Z=Z^1+Z^2$. Further we denote by $\hat \pi_r^1$ a   location of the supremum of $Z^1$. If $A\lesssim B$ and $B\lesssim A$, we write $A\sim B$. We denote the Schatten-$p$-norm, $1\le p\le\infty$, on $\R^{M\times M}$ by $||\cdot||_{S_p}$. Recall that for $C\in\R^{M\times M}$ with singular values $\lambda_1\ge...\ge\lambda_M$ the Schatten-$p$-norm of $C$ is given by
\begin{align*}
||C||_{S_p}=\sqrt[\scriptstyle p]{\sum_{i=1}^M \lambda_i^p} \text{ for }1\le p\le\infty \text{ and }||C||_{S_\infty}=\lambda_1.
\end{align*}
In particular we will use the Hilbert-Schmidt norm $||\cdot||_{S_2}$ and the spectral norm $||\cdot||_{S_\infty}$. Moreover put $\Delta_r:=\sum_{i=r+1}^{2r}\lambda_i^2$ and $r_M:=r\wedge (M-r)$. The Euclidean sphere in $\R^M$ is denoted by $S^{M-1}$. For any set $B\subset\S_{M,r}$ we define its complement by $B':=\S_{M,r}\setminus B$. Lastly, $\lfloor x \rfloor $ is the largest integer equal or less than $x\in\R$.

\section{Estimation of traces involving differences of projection matrices}
In this section we derive estimations of traces of certain matrices like those arising in the process $Z$. However, the results are stated in a quite general way and are phrased in a deterministic setting.\newline
First recall some basic properties of orthogonal projections. Clearly, we have
\begin{align*}
\pi_r=\pi_r^T \text{ and }\pi_r=\pi_r\pi_r \text{ for }\pi_r\in\S_{M,r}. 
\end{align*}
Therefore every orthogonal projection $\pi_r$ is positive-semidefinite. This implies 
\begin{align*}\text{\upshape{tr}}(\pi^{(1)}_r\pi^{(2)}_r)\ge 0 ~\text{for any}~ \pi^{(1)}_r,\pi^{(2)}_r\in\S_{M,r}. \end{align*}
We conclude
\begin{align*}
||\pi^{(2)}_r-\pi^{(1)}_r||_{S_2}=||(Id-\pi^{(1)}_r)-(Id-\pi^{(2)}_r)||_{S_2}\le\sqrt{2r_M}.
\end{align*}
Finally, note that by symmetry of $\pi^{(2)}_r-\pi^{(1)}_r$ we have
\begin{align*}
||\pi^{(2)}_r-\pi^{(1)}_r||_{S_\infty}&=\underset{x\in S^{M-1}}{\sup}|x^T(\pi^{(2)}_r-\pi^{(1)}_r)x|\\
&=\underset{x\in S^{M-1}}{\sup}|\underbrace{x^T\pi^{(2)}_rx}_{\in [0,1]}- \underbrace{x^T\pi^{(1)}_rx}_{\in [0,1]}|\le 1.
\end{align*}
The next lemma provides a useful estimate to bound $\text{\upshape{tr}}(E^T(\tilde\pi_r-\pi_r )C)$ and $Z_{\tilde\pi_r}^2$. 
\begin{lemma}
Let $\pi_r^{(1)},\pi_r^{(2)}\in\mathcal{S}_{M,r}$ and $A,B\in\R^{M\times M}$, then the following inequality holds
\begin{align}
\text{\upshape{tr}}(A^T(\pi_r^{(2)}-\pi_r^{(1)})B)\le \sqrt{2r_M}||A||_{S_\infty}||B||_{S_\infty}||\pi_r^{(2)}-\pi_r^{(1)}||_{S_2}.
\end{align}
\end{lemma}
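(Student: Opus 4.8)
The plan is to estimate the left-hand side by routing through the Schatten-$1$ norm of the difference matrix $D:=\pi_r^{(2)}-\pi_r^{(1)}$ and then trading it for the Schatten-$2$ norm, the conversion factor $\sqrt{2r_M}$ being exactly the square root of a bound on $\mathrm{rank}(D)$.

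First I would apply Hölder's inequality for Schatten norms, $|\text{tr}(XY)|\le\|X\|_{S_1}\|Y\|_{S_\infty}$, with $X=A^T D$ and $Y=B$, and then the ideal property $\|A^T D\|_{S_1}\le\|A^T\|_{S_\infty}\|D\|_{S_1}=\|A\|_{S_\infty}\|D\|_{S_1}$ (singular values are invariant under transposition). Since $\text{tr}(A^T D B)\le|\text{tr}(A^T D B)|$, this reduces the claim to the purely deterministic estimate $\|D\|_{S_1}\le\sqrt{2r_M}\,\|D\|_{S_2}$.

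For this last inequality I would first note that $\mathrm{rank}(D)\le 2r_M$: on the one hand $\mathrm{rank}(D)\le\mathrm{rank}(\pi_r^{(1)})+\mathrm{rank}(\pi_r^{(2)})=2r$, and writing $D=(Id-\pi_r^{(1)})-(Id-\pi_r^{(2)})$ one also gets $\mathrm{rank}(D)\le 2(M-r)$, hence $\mathrm{rank}(D)\le 2\,(r\wedge(M-r))=2r_M$. Then Cauchy--Schwarz applied to the at most $2r_M$ nonzero singular values $\sigma_i(D)$ of $D$ gives $\|D\|_{S_1}=\sum_i\sigma_i(D)\le\sqrt{\mathrm{rank}(D)}\,\big(\sum_i\sigma_i(D)^2\big)^{1/2}\le\sqrt{2r_M}\,\|D\|_{S_2}$, which combined with the previous step proves the lemma.

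I do not anticipate a genuine obstacle here, as the statement is elementary linear algebra. The one point worth care is the origin of the factor $\sqrt{2r_M}$: it stems from the rank bound on $D$ (the same mechanism behind the preamble's estimate $\|\pi_r^{(2)}-\pi_r^{(1)}\|_{S_2}\le\sqrt{2r_M}$), not from a dimension count on $\R^M$. An alternative proof, avoiding Hölder, would diagonalize the symmetric matrix $D=\sum_i\mu_i u_iu_i^T$, bound each summand of $\text{tr}(A^T D B)=\sum_i\mu_i\,u_i^T B A^T u_i$ by $|\mu_i|\,\|A\|_{S_\infty}\|B\|_{S_\infty}$ via Cauchy--Schwarz, and then estimate $\sum_i|\mu_i|$ exactly as above; it yields the same bound.
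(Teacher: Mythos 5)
Your proof is correct, and it takes a genuinely different route from the paper's. You go through trace duality: von Neumann/H\"older in the form $|\mathrm{tr}(XY)|\le\|X\|_{S_1}\|Y\|_{S_\infty}$, the ideal property $\|A^TD\|_{S_1}\le\|A\|_{S_\infty}\|D\|_{S_1}$, and then the single estimate $\|D\|_{S_1}\le\sqrt{\mathrm{rank}(D)}\,\|D\|_{S_2}\le\sqrt{2r_M}\,\|D\|_{S_2}$, where the rank bound $\mathrm{rank}(D)\le 2r_M$ correctly combines the two representations $D=\pi_r^{(2)}-\pi_r^{(1)}$ and $D=(Id-\pi_r^{(1)})-(Id-\pi_r^{(2)})$. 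The paper instead decomposes $D=\pi_r^{(2)}(Id-\pi_r^{(1)})+(\pi_r^{(2)}-Id)\pi_r^{(1)}$, uses the orthogonality of the two summands to identify each of their Hilbert--Schmidt norms as $\tfrac{1}{\sqrt{2}}\|D\|_{S_2}$, and applies Cauchy--Schwarz for the trace inner product to each piece, with the rank of $\pi_r^{(i)}$ (respectively $Id-\pi_r^{(i)}$) converting $\|BA^T\pi_r^{(2)}\|_{S_2}$ into $\sqrt{r_M}\,\|BA^T\|_{S_\infty}$. Both arguments exploit the same low-rank mechanism and produce the identical constant $\sqrt{2r_M}$; yours is more modular, relying only on standard Schatten-norm facts, while the paper's is more hands-on and produces as a by-product the identity $\|\pi_r^{(2)}(Id-\pi_r^{(1)})\|_{S_2}=\tfrac{1}{\sqrt{2}}\|\pi_r^{(1)}-\pi_r^{(2)}\|_{S_2}$. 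You are also right that proving the bound for $|\mathrm{tr}(A^TDB)|$ is (harmlessly) stronger than the stated one-sided inequality, and your diagonalization variant at the end is equally valid.
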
 
\begin{proof}[Proof]
First, note that \begin{align*}\pi_r^{(2)}-\pi_r^{(1)}=\pi_r^{(2)}-\pi_r^{(2)}\pi_r^{(1)}+\pi_r^{(2)}\pi_r^{(1)}-\pi_r^{(1)}=\pi_r^{(2)}(Id-\pi_r^{(1)})+(\pi_r^{(2)}-Id)\pi_r^{(1)}.\end{align*}
By orthogonality of the decomposition $\pi_r^{(2)}(Id-\pi_r^{(1)})+(\pi_r^{(2)}-Id)\pi_r^{(1)}$ we get
\begin{align}
||(Id-\pi_r^{(2)})\pi_r^{(1)}||_{S_2}=||\pi_r^{(2)}(Id-\pi_r^{(1)})||_{S_2}=\frac{1}{\sqrt{2}}||\pi_r^{(1)}-\pi_r^{(2)}||_{S_2}.
\end{align}
By Cauchy-Schwarz inequality follows
\begin{align*}
\text{\upshape{tr}}(&A^T(\pi_r^{(2)}-\pi_r^{(1)})B)\\
&=\text{\upshape{tr}}(A^T\pi_r^{(2)}(Id-\pi_r^{(1)})B)+\text{\upshape{tr}}(A^T(\pi_r^{(2)}-Id)\pi_r^{(1)}B)\\
&\le \left(||BA^T\pi_r^{(2)}||_{S_2}\wedge ||(Id-\pi_r^{(1)})BA^T||_{S_2}\right)||\pi_r^{(2)}(Id-\pi_r^{(1)})||_{S_2}\\
&\hspace{0.8cm}+\left(||\pi_r^{(1)}BA^T||_{S_2}\wedge ||BA^T(Id-\pi_r^{(2)})||_{S_2}\right)||(\pi_r^{(2)}-Id)\pi_r^{(1)}||_{S_2}\\
&\le \frac{1}{\sqrt{2}}\sqrt{r_M}||BA^T||_{S_\infty}||\pi_r^{(1)}-\pi_r^{(2)}||_{S_2}\\
&\hspace{0.8cm}+\frac{1}{\sqrt{2}}\sqrt{r_M}||BA^T||_{S_\infty}||\pi_r^{(1)}-\pi_r^{(2)}||_{S_2}\\
&\le \sqrt{2r_M}||A||_{S_\infty}||B||_{S_\infty}||\pi_r^{(1)}-\pi_r^{(2)}||_{S_2}.
\end{align*}
\end{proof}
\noindent The statement of the lemma is optimal in the case $M\ge 2r$. The equality attains for matrices 
\begin{align*}\pi_r^{(1)}=\sum\limits_{i=1}^r u_iu_i^T, ~&\pi_r^{(2)}=\sum\limits_{i=1}^r (\sqrt{1-\alpha^2}u_i+\alpha\tilde u_i)(\sqrt{1-\alpha^2}u_i+\alpha \tilde u_i)^T,\\
&A=\mu\text{\upshape{Id}},~B=\nu\left(\pi_r^{(1)}-\pi_r^{(2)}\right),
\end{align*}
where $u_1,...,u_r,\tilde u_1,...,\tilde u_r$ are orthonormal vectors and
 $0\le\alpha\le1,~\mu,\nu>0$. We give a brief computation:
\begin{align*}
\text{\upshape{tr}}(A^T(\pi_r^{(1)}-\pi_r^{(2)})B)&=\mu\nu\text{\upshape{tr}}\Bigg(\left(\pi_r^{(1)}-\sum\limits_{i=1}^r (\sqrt{1-\alpha^2}u_i+\alpha\tilde u_i)(\sqrt{1-\alpha^2}u_i+\alpha \tilde u_i)^T\right)\\
&\hspace{1.1cm}\times\left(\pi_r^{(1)}-\sum\limits_{i=1}^r (\sqrt{1-\alpha^2}u_i+\alpha\tilde u_i)(\sqrt{1-\alpha^2}u_i+\alpha \tilde u_i)^T\right)\Bigg)\\
&=\mu\nu\left(2r-2\text{\upshape{tr}}\left(\pi_r^{(1)}\pi_r^{(2)}\right)\right)\\
&=\mu\nu\left(2r-2r(1-\alpha^2)\right)\\
&=\sqrt{2r}\mu\nu\alpha^2\sqrt{2r}.
\end{align*}
So it remains to show that $||\pi_r^{(1)}-\pi_r^{(2)}||_{S_2}=\alpha\sqrt{2r}$ and $||\pi_r^{(1)}-\pi_r^{(2)}||_{S_\infty}=\alpha$. \\
The first equation is obvious concerning the previous calculation, since
$$||\pi_r^{(1)}-\pi_r^{(2)}||_{S_2}=\sqrt{\text{\upshape{tr}}\left(\left(\pi_r^{(1)}-\pi_r^{(2)}\right)\left(\pi_r^{(1)}-\pi_r^{(2)}\right)\right)}=\alpha\sqrt{2r}.$$
To prove the second equation, one can check that $\alpha$ and $-\alpha$ are the only non-zero eigenvalues of $\pi_r^{(1)}-\pi_r^{(2)}$.
Since $\pi_r^{(1)}-\pi_r^{(2)}$ is symmetric, this implies that $||\pi_r^{(1)}-\pi_r^{(2)}||_{S_\infty}=\alpha.$\newline
\indent As $Z$ has a negative drift, Lemma 1 is not useful to bound $||\tilde\pi_r C||^2_{S_2}-||\pi_rC||^2_{S_2}$. Therefore the next lemma gives an estimate on the drift term. It is significant for our subsequent computations that the distance $||\tilde{\pi}_r-\pi_r||_{S_2}$ influences the drift term rather squared than linearly.
\begin{lemma}~\\
\vspace{-1.1cm}
\begin{itemize}
\item[(i)] For any $\tilde{\pi}_r\in\mathcal{S}_{M,r}$ the following inequality holds
\begin{align}
||\tilde{\pi}_rC||^2_{S_2}-||\pi_rC||^2_{S_2}\le-\frac{1}{2}\left(\lambda_r^2-\lambda_{r+1}^2\right)||\tilde{\pi}_r-\pi_r||^2_{S_2}.
\end{align}
\item[(ii)] For any $\tilde{\pi}_r\in\mathcal{S}_{M,r}$ such that $||\tilde\pi_r-\pi_r||_{S_2}\ge \lambda_r^{-1}\sqrt{2\Delta_r}$, we have
\begin{align}
||\tilde{\pi}_rC||^2_{S_2}-||\pi_rC||^2_{S_2}\le-\frac{1}{2}\lambda_r^2||\tilde\pi_r-\pi_r||_{S_2}^2+\Delta_r.
\end{align}
\end{itemize}
\end{lemma}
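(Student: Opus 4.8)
The plan is to diagonalise the problem in the left-singular basis of $C$ and to reduce both assertions to an elementary ``bathtub'' estimate. First I would fix a singular value decomposition $C=\sum_{i=1}^M\lambda_iu_iv_i^T$ with $(u_i)_{i\le M}$, $(v_i)_{i\le M}$ orthonormal bases of $\R^M$, so that $CC^T=\sum_{i=1}^M\lambda_i^2u_iu_i^T$. For every $\tilde\pi_r\in\S_{M,r}$ one then has
\[
\|\tilde\pi_rC\|_{S_2}^2=\text{\upshape{tr}}\big(C^T\tilde\pi_rC\big)=\text{\upshape{tr}}\big(\tilde\pi_rCC^T\big)=\sum_{i=1}^M\lambda_i^2\,t_i,\qquad t_i:=u_i^T\tilde\pi_ru_i=\|\tilde\pi_ru_i\|^2 ,
\]
where $t_i\in[0,1]$ and $\sum_{i=1}^Mt_i=\text{\upshape{tr}}(\tilde\pi_r)=r$. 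Maximising the linear form $\sum_i\lambda_i^2t_i$ over $\{t\in[0,1]^M:\sum_it_i=r\}$ yields the value $\sum_{i=1}^r\lambda_i^2$, attained at $t=(1,\dots,1,0,\dots,0)$, i.e.\ at $\sum_{i=1}^ru_iu_i^T$; hence $\|\pi_rC\|_{S_2}^2=\sum_{i=1}^r\lambda_i^2$ and I may and do take $\pi_r=\sum_{i=1}^ru_iu_i^T$ (any maximiser has this form after a suitable labelling of the singular vectors). This gives the basic identity
\[
\|\tilde\pi_rC\|_{S_2}^2-\|\pi_rC\|_{S_2}^2=-\sum_{i=1}^r\lambda_i^2(1-t_i)+\sum_{i=r+1}^M\lambda_i^2t_i .
\]

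Next I would record the link with the Hilbert--Schmidt distance: since $\text{\upshape{tr}}(\tilde\pi_r\pi_r)=\sum_{i=1}^rt_i$, we get $\|\tilde\pi_r-\pi_r\|_{S_2}^2=2r-2\text{\upshape{tr}}(\tilde\pi_r\pi_r)=2\sum_{i=1}^r(1-t_i)$ and, using $\sum_it_i=r$ once more, also $\|\tilde\pi_r-\pi_r\|_{S_2}^2=2\sum_{i=r+1}^Mt_i$. Writing $a:=\tfrac12\|\tilde\pi_r-\pi_r\|_{S_2}^2=\sum_{i=1}^r(1-t_i)=\sum_{i=r+1}^Mt_i$ and recalling from the preliminary remarks that $a\le r_M$, part (i) follows by bounding each block by its extreme singular value: from $\lambda_i^2\ge\lambda_r^2$ for $i\le r$ and $\lambda_i^2\le\lambda_{r+1}^2$ for $i\ge r+1$,
\[
\|\tilde\pi_rC\|_{S_2}^2-\|\pi_rC\|_{S_2}^2\le-\lambda_r^2\sum_{i=1}^r(1-t_i)+\lambda_{r+1}^2\sum_{i=r+1}^Mt_i=-(\lambda_r^2-\lambda_{r+1}^2)\,a ,
\]
which is exactly the claim (and when $\lambda_r=\lambda_{r+1}$ the inequality is anyway immediate from the maximality of $\pi_r$).

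For part (ii) I would keep the bound $-\sum_{i=1}^r\lambda_i^2(1-t_i)\le-\lambda_r^2a$ on the first block but estimate the second block by a rearrangement argument: as $(\lambda_i^2)_{i>r}$ is non-increasing and $(t_i)_{i>r}\in[0,1]^{M-r}$ with $\sum_{i>r}t_i=a$, the sum $\sum_{i>r}\lambda_i^2t_i$ is maximal when the mass $a$ sits on the smallest indices, so
\[
\sum_{i=r+1}^M\lambda_i^2t_i\le\sum_{i=r+1}^{r+\lceil a\rceil}\lambda_i^2\le\sum_{i=r+1}^{r+r_M}\lambda_i^2=\Delta_r ,
\]
using $\lceil a\rceil\le r_M$ in the middle step and, in the last one, that $r+r_M=(2r)\wedge M$ while $\lambda_i=0$ for $i>M$. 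Combining the two bounds gives
\[
\|\tilde\pi_rC\|_{S_2}^2-\|\pi_rC\|_{S_2}^2\le-\lambda_r^2a+\Delta_r=-\tfrac12\lambda_r^2\|\tilde\pi_r-\pi_r\|_{S_2}^2+\Delta_r ,
\]
and the hypothesis $\|\tilde\pi_r-\pi_r\|_{S_2}\ge\lambda_r^{-1}\sqrt{2\Delta_r}$ is precisely what makes this right-hand side non-positive, in line with the fact that the left-hand side vanishes at $\tilde\pi_r=\pi_r$.

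The step I expect to require genuine care is the rearrangement estimate in (ii): one has to write the maximiser of $\sum_{i>r}\lambda_i^2t_i$ under the two constraints explicitly (it sets $t_i=1$ for $i=r+1,\dots,r+\lfloor a\rfloor$, a fractional value at $i=r+\lfloor a\rfloor+1$, and $0$ afterwards), bound its value by $\sum_{i=r+1}^{r+\lceil a\rceil}\lambda_i^2$, and verify that $r+\lceil a\rceil$ never exceeds $2r\wedge M$, so that this truncated sum is dominated by $\Delta_r$ in both regimes $M\ge2r$ and $M<2r$. Everything else consists of routine trace manipulations.
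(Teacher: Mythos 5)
Your argument is correct and, unlike the paper, it is self-contained: the paper's ``proof'' of this lemma consists of the single remark that the case $\lambda_r=0$ is trivial and that for $\lambda_r>0$ both inequalities follow from Proposition 8.1 of \cite{Rohde2012}, so the actual work is outsourced to an external reference, whereas you prove everything from scratch. Your route --- diagonalising in the left singular basis, writing $\|\tilde\pi_rC\|_{S_2}^2=\sum_i\lambda_i^2t_i$ with $t_i=u_i^T\tilde\pi_ru_i\in[0,1]$ and $\sum_it_i=r$, identifying $a=\tfrac12\|\tilde\pi_r-\pi_r\|_{S_2}^2=\sum_{i\le r}(1-t_i)=\sum_{i>r}t_i$, and then bounding the two blocks by their extreme singular values for (i), respectively by a rearrangement over $\{t\in[0,1]^{M-r}:\sum_{i>r}t_i=a\}$ for (ii) --- is sound, and the delicate points are all addressed: that $\pi_r$ may be taken of the form $\sum_{i\le r}u_iu_i^T$ after relabelling tied singular vectors, that $\lceil a\rceil\le r_M$ because $a\le r_M$ and $r_M$ is an integer, and that $r+r_M=(2r)\wedge M$ so the truncated sum is dominated by $\Delta_r$ in both regimes. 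Two small observations: your proof of (ii) never uses the hypothesis $\|\tilde\pi_r-\pi_r\|_{S_2}\ge\lambda_r^{-1}\sqrt{2\Delta_r}$, so you in fact establish the stronger statement that the bound holds for every $\tilde\pi_r\in\S_{M,r}$ (the hypothesis only ensures the right-hand side is nonpositive, which is how the bound is deployed in Proposition 1); and your computation makes explicit the factor $\tfrac12$ linking $\|\tilde\pi_r-\pi_r\|_{S_2}^2$ to the transferred mass $a$, which is precisely why the drift is quadratic rather than linear in the $S_2$-distance, as the paper emphasises just before the lemma.
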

\begin{proof}[Proof]
The case $\lambda_r=0$ is trivial. For $\lambda_r>0$ both inequalities follow easily from Proposition 8.1 in \cite{Rohde2012}.
\end{proof}
\noindent Now we derive an upper bound on $Z^1_{\hat\pi_r^1}$, which will be useful to estimate the expectation of $Z_{\hat\pi_r}$. In a certain way the upper bound regards the location of $\hat\pi_r^1$.
\begin{proposition}
For the supremum $Z^1_{\hat{\pi}^1_r}$ of the process $Z^1$ we have $Z^1_{\hat{\pi}^1_r}\le Y$ with
\begin{align}
Y:=\min\left(\mathord{\mathrm{I}}',\mathord{\mathrm{II}}',\mathord{\mathrm{III}}'\right),
\end{align}
where
\begin{subequations}
\begin{align}
\mathord{\mathrm{I}}'&:=4r_M\lambda_1\sigma_1,\\
\mathord{\mathrm{II}}'&:=\begin{cases}4r_M\frac{\lambda_1^2}{\lambda_r^2-\lambda_{r+1}^2}\sigma_1^2 &\text{ if }\lambda_r>\lambda_{r+1},\\
\infty&\text{ if }\lambda_r=\lambda_{r+1},
\end{cases}\\
\mathord{\mathrm{III}}'&:=\begin{cases}\max\left(4\sqrt{r_M\Delta_r}\frac{\lambda_1}{\lambda_r}\sigma_1,~8r_M\frac{\lambda_1^2}{\lambda_r^2}\sigma_1^2\right)&\text{ if }\lambda_r>0,\\
\infty&\text{ if }\lambda_r=0.
\end{cases}
\end{align}
\end{subequations}
\end{proposition}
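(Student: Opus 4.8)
The plan is to bound $Z^1_{\tilde\pi_r}$ uniformly over all $\tilde\pi_r\in\S_{M,r}$ by each of the three quantities $\mathord{\mathrm{I}}'$, $\mathord{\mathrm{II}}'$, $\mathord{\mathrm{III}}'$ separately, and only then pass to the supremum $Z^1_{\hat\pi_r^1}$ (which is attained, since $Z^1$ is continuous on the compact set $\S_{M,r}$). Everything rests on a single uniform control of the mixed term of $Z^1$: applying Lemma 1 with $A=E$ and $B=C$, and using $\|E\|_{S_\infty}=\sigma_1$, $\|C\|_{S_\infty}=\lambda_1$, I get
\[
2\,\mathrm{tr}\!\left(E^T(\tilde\pi_r-\pi_r)C\right)\ \le\ 2\sqrt{2r_M}\,\lambda_1\sigma_1\,t,\qquad t:=\|\tilde\pi_r-\pi_r\|_{S_2}\in\big[0,\sqrt{2r_M}\,\big].
\]
The whole argument then amounts to playing this term, linear in $t$, against the concave (negative) quadratic-in-$t$ drift bounds supplied by Lemma 2.

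For the bound by $\mathord{\mathrm{I}}'$ I would just use that the drift $\|\tilde\pi_rC\|_{S_2}^2-\|\pi_rC\|_{S_2}^2$ is nonpositive (since $\pi_r$ maximizes $\|\,\cdot\,C\|_{S_2}$ over $\S_{M,r}$) and then insert $t\le\sqrt{2r_M}$ into the mixed-term estimate, which gives $Z^1_{\tilde\pi_r}\le 2\sqrt{2r_M}\,\lambda_1\sigma_1\cdot\sqrt{2r_M}=4r_M\lambda_1\sigma_1=\mathord{\mathrm{I}}'$. For the bound by $\mathord{\mathrm{II}}'$, where only the case $\lambda_r>\lambda_{r+1}$ requires anything, I would feed in Lemma 2(i) to obtain $Z^1_{\tilde\pi_r}\le-\tfrac12(\lambda_r^2-\lambda_{r+1}^2)t^2+2\sqrt{2r_M}\,\lambda_1\sigma_1\,t$ and maximize this parabola over $t\ge0$; its vertex value is exactly $4r_M\frac{\lambda_1^2}{\lambda_r^2-\lambda_{r+1}^2}\sigma_1^2=\mathord{\mathrm{II}}'$.

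The bound by $\mathord{\mathrm{III}}'$ (case $\lambda_r>0$) is where the location of $\hat\pi_r^1$ — in practice a case distinction on the value of $t$ — really matters, since Lemma 2(ii) is only available on the regime $t\ge t_0$ with $t_0:=\lambda_r^{-1}\sqrt{2\Delta_r}$. On $\{t<t_0\}$ I would again use the nonpositive drift together with the mixed-term bound to get $Z^1_{\tilde\pi_r}<2\sqrt{2r_M}\,\lambda_1\sigma_1\,t_0=4\sqrt{r_M\Delta_r}\,\tfrac{\lambda_1}{\lambda_r}\sigma_1$. On $\{t\ge t_0\}$ Lemma 2(ii) gives $Z^1_{\tilde\pi_r}\le g(t):=-\tfrac12\lambda_r^2t^2+\Delta_r+2\sqrt{2r_M}\,\lambda_1\sigma_1\,t$, which I would maximize over the half-line $[t_0,\infty)$. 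Its unconstrained vertex is $t^{\ast}:=2\sqrt{2r_M}\,\lambda_1\sigma_1/\lambda_r^2$: if $t^{\ast}\le t_0$ then $g$ is nonincreasing on $[t_0,\infty)$ and $g(t_0)=4\sqrt{r_M\Delta_r}\,\tfrac{\lambda_1}{\lambda_r}\sigma_1$; if $t^{\ast}>t_0$ then, squaring this inequality, $\Delta_r<4r_M\frac{\lambda_1^2}{\lambda_r^2}\sigma_1^2$, so $g(t^{\ast})=\Delta_r+4r_M\frac{\lambda_1^2}{\lambda_r^2}\sigma_1^2<8r_M\frac{\lambda_1^2}{\lambda_r^2}\sigma_1^2$. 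In all cases $Z^1_{\tilde\pi_r}\le\max\!\big(4\sqrt{r_M\Delta_r}\,\tfrac{\lambda_1}{\lambda_r}\sigma_1,\,8r_M\frac{\lambda_1^2}{\lambda_r^2}\sigma_1^2\big)=\mathord{\mathrm{III}}'$. Taking suprema over $\tilde\pi_r$ in the three bounds and then the minimum yields $Z^1_{\hat\pi_r^1}\le Y$.

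The hard part, I expect, is precisely the constrained maximization of $g$ in the $\mathord{\mathrm{III}}'$ step: one has to notice that the sign of $t^{\ast}-t_0$ is the same dichotomy as $\Delta_r$ versus $4r_M\lambda_1^2\sigma_1^2/\lambda_r^2$, which is what lets the stray additive $\Delta_r$ in $g(t^{\ast})$ be absorbed into the quadratic term so the answer collapses to a clean maximum of two expressions rather than a sum; one must also keep in mind that the split at $t_0$ cannot be dropped, because Lemma 2(ii) fails below $t_0$. Everything else is disciplined bookkeeping of Lemma 1 against Lemma 2 plus elementary optimization of concave parabolas.
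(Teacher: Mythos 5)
Your proposal is correct and follows essentially the same route as the paper: the uniform mixed-term bound from Lemma 1, the nonpositive drift for $\mathord{\mathrm{I}}'$, the vertex of the concave parabola from Lemma 2(i) for $\mathord{\mathrm{II}}'$, and for $\mathord{\mathrm{III}}'$ the split of $\S_{M,r}$ at $\|\tilde\pi_r-\pi_r\|_{S_2}=\lambda_r^{-1}\sqrt{2\Delta_r}$ with the same dichotomy on whether the vertex of $g$ lies above or below that threshold (which is exactly the paper's case distinction $\sqrt{2\Delta_r}\lessgtr 2\sqrt{2r_M}\tfrac{\lambda_1}{\lambda_r}\sigma_1$, used to absorb the additive $\Delta_r$). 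The only cosmetic difference is that you maximize $g$ over the unbounded half-line rather than capping the argument at $\sqrt{2r_M}$ as the paper does, which still yields a valid (if nominally weaker) upper bound.
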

\begin{proof}
We prove for $\mathord{\mathrm{I}}',\mathord{\mathrm{II}}'$ and $\mathord{\mathrm{III}}'$ that $Z^1_{\hat\pi_r^1}$ is less or equal to each one.\\
$Z^1_{\hat\pi_r^1}\le\mathord{\mathrm{I}}':$ Since $||\tilde\pi_rC||_{S_2}^2-||\pi_rC||_{S_2}^2\le 0$, we get by Lemma 1 for any $\tilde{\pi}_r\in\mathcal{S}_{M,r}$
\begin{align}
Z^1_{\tilde\pi_r}\le 2\sqrt{2r_M}\sigma_1\lambda_1||\tilde\pi_r-\pi_r||_{S_2}\le 4r_M\sigma_1\lambda_1.
\end{align}
$Z^1_{\hat\pi_r^1}\le\mathord{\mathrm{II}}':$ Assume $\lambda_r>\lambda_{r+1}$. 
We obtain by Lemma 1 and 2(i) for any $\tilde{\pi}_r\in\mathcal{S}_{M,r}$
\begin{align*}
Z_{\tilde\pi_r}^1&=||\tilde{\pi}_rC||_{S_2}^2-||\pi_rC||_{S_2}^2+2\text{\upshape{tr}}(E^T(\tilde{\pi}_r-\pi_r )C)\\
&\le -\frac{1}{2} \left(\lambda_r^2-\lambda_{r+1}^2\right)||\tilde{\pi}_r-\pi_r||_{S_2}^2+2\sqrt{2r_M}\sigma_1\lambda_1||\tilde{\pi}_r-\pi_r||_{S_2}\\
\end{align*}
Then maximizing the right-hand side of the inequality
\begin{align*}
Z_{\tilde\pi_r}^1\le -\frac{1}{2}\left(\lambda_r^2-\lambda_{r+1}^2\right)||\tilde{\pi}_r-\pi_r||_{S_2}^2+2\sqrt{2r_M}\sigma_1\lambda_1||\tilde{\pi}_r-\pi_r||_{S_2}
\end{align*}
over all $x:=||\tilde\pi_r-\pi_r||_{S_2}$ provides the claim.\\
$Z^1_{\hat\pi_r^1}\le\mathord{\mathrm{III}}':$ Assume $\lambda_r>0$. In order to prove the last bound we split $S_{M,r}$ into two sets and take the supremum of $Z^1$ on this sets separately. We define
\begin{align}
B_{\mathord{\mathrm{III}}'}:=\{\tilde\pi_r\in\S_{M,r}:||\tilde\pi_r-\pi_r||_{S_2}<\lambda_r^{-1}\sqrt{2\Delta_r}\}.
\end{align}
It holds
\begin{align}
Z^1_{\hat\pi_r^1}=\max\left(\underset{\tilde\pi_r\in B_{\mathord{\mathrm{III}}'}}{\sup}Z^1_{\tilde\pi_r},~\underset{\tilde\pi_r\in B'_{\mathord{\mathrm{III}}'}}{\sup}Z^1_{\tilde\pi_r}\right).\label{eqn:sepSup}
\end{align}
For the first expression in the maximum of (\ref{eqn:sepSup}) we get analogous to the proof of $Z^1_{\hat\pi_r^1}\le\mathord{\mathrm{I}}'$:
\begin{align}
\underset{\tilde\pi_r\in B_{\mathord{\mathrm{III}}'}}{\sup}Z^1_{\tilde\pi_r}\le \underset{\tilde\pi_r\in B_{\mathord{\mathrm{III}}'}}{\sup} 2\sqrt{2r_M}\sigma_1\lambda_1||\tilde\pi_r-\pi_r||_{S_2}\le 4\sqrt{r_M\Delta_r}\frac{\lambda_1}{\lambda_r}\sigma_1. \label{eqn:ersSup}
\end{align}
It remains to bound the second expression in the maximum of (\ref{eqn:sepSup}). By Lemma 1 again and by Lemma 2(ii) follows for any $\tilde\pi_r\in B'_{\mathord{\mathrm{III}}'}$
\begin{align}
Z^1_{\tilde\pi_r}\le-\frac{1}{2}\lambda_r^2||\tilde\pi_r-\pi_r||_{S_2}^2+\Delta_r+2\sqrt{2r_M}\sigma_1\lambda_1||\tilde\pi_r-\pi_r||_{S_2}.\label{eqn:nerEqn}
\end{align}
The right-hand side attains its global maximum on 
\begin{align}
\{\tilde\pi_r\in\S_{M,r}:||\tilde\pi_r-\pi_r||_{S_2}=2\sqrt{2r_M}\sigma_1\frac{\lambda_1}{\lambda_r^2}\wedge \sqrt{2r_M}\}. \label{eqn:locMax}
\end{align}
If $2\sqrt{2r_M}\sigma_1\frac{\lambda_1}{\lambda_r}<\sqrt{2\Delta_r}$, then it holds 
\begin{align*}
\{\tilde\pi_r\in\S_{M,r}:||\tilde\pi_r-\pi_r||_{S_2}=2\sqrt{2r_M}\sigma_1\frac{\lambda_1}{\lambda_r^2}\wedge \sqrt{2r_M}\}\cap B'_{\mathord{\mathrm{III}}'}=\emptyset.
\end{align*}
In this case due to reasons of monotonicity the right-hand side of (\ref{eqn:nerEqn}) restricted to $\{\tilde\pi_r\in\S_{M,r}:||\tilde\pi_r-\pi_r||_{S_2}\ge\lambda_r^{-1}\sqrt{2\Delta_r}\}$ attains its minimum on $\{\tilde\pi_r\in\S_{M,r}:||\tilde\pi_r-\pi_r||_{S_2}=\lambda_r^{-1}\sqrt{2\Delta_r}\}$. So we have
\begin{align}
\mathds{1}_{\{\sqrt{2\Delta_r}>2\sqrt{2r_M}\frac{\lambda_1}{\lambda_r}\sigma_1\}}Z^1_{\tilde\pi_r}\le\mathds{1}_{\{\sqrt{2\Delta_r}>2\sqrt{2r_M}\frac{\lambda_1}{\lambda_r}\sigma_1\}} 4\sqrt{r_M\Delta_r}\frac{\lambda_1}{\lambda_r}\sigma_1. \label{eqn:zweSup1}
\end{align}
Otherwise by (\ref{eqn:locMax}) follows
\begin{align}
\mathds{1}_{\{\sqrt{2\Delta_r}\le 2\sqrt{2r_M}\frac{\lambda_1}{\lambda_r}\sigma_1\}}Z^1_{\tilde\pi_r} &\le \mathds{1}_{\{\sqrt{2\Delta_r}\le 2\sqrt{2r_M}\frac{\lambda_1}{\lambda_r}\sigma_1\}}\left(4r_M\frac{\lambda_1^2}{\lambda_r^2}\sigma_1^2+\Delta_r\right) \notag \\
&\le \mathds{1}_{\{\sqrt{2\Delta_r}\le 2\sqrt{2r_M}\frac{\lambda_1}{\lambda_r}\sigma_1\}}8r_M\frac{\lambda_1^2}{\lambda_r^2}\sigma_1^2.
 \label{eqn:zweSup2}
\end{align}
By (\ref{eqn:zweSup1}) and (\ref{eqn:zweSup2}) we get
\begin{align}
\underset{\tilde\pi_r\in B'_{\mathord{\mathrm{III}}'}}{\sup}Z^1_{\tilde\pi_r}&=\underset{\tilde\pi_r\in B'_{\mathord{\mathrm{III}}'}}{\sup}~\mathds{1}_{\{\sqrt{2\Delta_r}>2\sqrt{2r_M}\frac{\lambda_1}{\lambda_r}\sigma_1\}}Z^1_{\tilde\pi_r}+\mathds{1}_{\{\sqrt{2\Delta_r}\le 2\sqrt{2r_M}\frac{\lambda_1}{\lambda_r}\sigma_1\}}Z^1_{\tilde\pi_r} \notag \\
&\le \mathds{1}_{\{\sqrt{2\Delta_r}>2\sqrt{2r_M}\frac{\lambda_1}{\lambda_r}\sigma_1\}} 4\sqrt{r_M\Delta_r}\frac{\lambda_1}{\lambda_r}\sigma_1+\mathds{1}_{\{\sqrt{2\Delta_r}\le 2\sqrt{2r_M}\frac{\lambda_1}{\lambda_r}\sigma_1\}}8r_M\frac{\lambda_1^2}{\lambda_r^2}\sigma_1^2 \notag \\
&\le \max\left(4\sqrt{r_M\Delta_r}\frac{\lambda_1}{\lambda_r}\sigma_1,~8r_M\frac{\lambda_1^2}{\lambda_r^2}\sigma_1^2\right). \label{eqn:zweSup}
\end{align}
Finally combining (\ref{eqn:ersSup}) and (\ref{eqn:zweSup}) yields $Z^1_{\hat\pi_r^1}\le\mathord{\mathrm{III}}'$.
\end{proof}

\section{Proof of Theorem 3}
\noindent Now we are ready to prove Theorem 3. We first state a result by \cite{Latala2005} in simplified terms. 
\begin{theorem}[\cite{Latala2005}]
For any $M\times M$ random matrix $E$ of centered i.i.d. entries with variance $\sigma^2$ and fourth moment $m_4$ the following inequality holds
\begin{align}
\E\sigma_1^2\lesssim M\left(\sigma^2+\sqrt{m_4}\right). \label{eqn:lat}
\end{align}
\end{theorem}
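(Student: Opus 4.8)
The statement is the $M\times M$ i.i.d.\ specialization of Latała's operator-norm bound, and since $\sigma_1^2=\lambda_{\max}(EE^T)=\|E\|_{S_\infty}^2$ is an extreme eigenvalue of a positive semidefinite matrix, the natural route is the moment (trace) method. The plan is to fix an integer $p\ge1$ and use $\sigma_1^{2p}=\lambda_{\max}(EE^T)^p\le\mathrm{tr}((EE^T)^p)$ together with the concavity of $t\mapsto t^{1/p}$ and Jensen's inequality to obtain $\E\sigma_1^2\le\big(\E\,\mathrm{tr}((EE^T)^p)\big)^{1/p}$. Everything then reduces to showing $\E\,\mathrm{tr}((EE^T)^p)\lesssim M\,\big(CM(\sigma^2+\sqrt{m_4})\big)^{p}$ for a universal constant $C$, after which choosing $p\sim\log M$ makes the prefactor $M^{1/p}$ bounded and yields $\E\sigma_1^2\lesssim M(\sigma^2+\sqrt{m_4})$.

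Next I would expand the trace as a sum over closed bipartite walks,
\[
\E\,\mathrm{tr}((EE^T)^p)=\sum \E\big[E_{i_1j_1}E_{i_2j_1}E_{i_2j_2}\cdots E_{i_1j_p}\big],
\]
the sum running over row-indices $i_1,\dots,i_p$ and column-indices $j_1,\dots,j_p$ in $\{1,\dots,M\}$. Because the entries are independent and centered, a summand vanishes unless every edge $(i,j)$ traversed by the walk occurs at least twice; I would therefore organize the sum by the isomorphism class (``shape'') of the underlying bipartite multigraph and its edge-multiplicity profile, and bound each summand by the product of the entry-moments $\E|E_{ij}|^{m}$ over the distinct edges.

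The counting then splits into a leading part and a correction. Shapes in which every visited edge has multiplicity exactly two are tree-like on at most $p+1$ vertices, their number is Catalan-bounded (by $4^p$), each summand carries $p$ factors $\sigma^2$, and each free vertex contributes an index sum of size $M$; this produces the leading term, of order $M^{p+1}\sigma^{2p}$. Every further coincidence of edges lowers the number of free vertices (a factor $M^{-1}$ apiece) while raising the order of the entry-moments that appear. The third moments created by odd multiplicities I would absorb via $\E|E_{ij}|^{3}\le\sigma\sqrt{m_4}$ (Cauchy--Schwarz), and the genuine higher even moments by trading them against the index powers just lost, the net effect being that only the combination $\sigma^2+\sqrt{m_4}$ survives at the $M$-scale. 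Summing the at most $C^p$ shapes then gives the displayed bound.

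The main obstacle is precisely this last step: one must bound the number of shapes carrying a prescribed multiplicity profile and, more delicately, control the high even moments $\E|E_{ij}|^{2k}$ for $2\le k\le p$ using only $\sigma^2$ and $m_4$, since for heavy-tailed entries these moments are not otherwise comparable to the variance. The mechanism that makes this work---each additional edge-multiplicity beyond two is paid for by the loss of a free index, so that no moment higher than $m_4$ enters at the $M$-scale---is exactly the content of Latała's argument. In the write-up I would either carry out this bookkeeping in full or, more economically, invoke Latała's general theorem for matrices with independent centered entries and merely verify that its specialization to $M\times M$ i.i.d.\ entries yields $M(\sigma^2+\sqrt{m_4})$, once the passage from the first to the second moment has been supplied by the even-moment estimate above.
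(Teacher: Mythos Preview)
The paper does not give its own proof of this theorem; it cites Lata{\l}a (2005) directly and only remarks that his argument---which is precisely the trace/moment method you sketch---already yields the bound on $\E\sigma_1^2$ rather than merely on $\E\sigma_1$. Your outline is therefore aligned with the cited source; the one point to watch in a full write-up is that entries with only a finite fourth moment may have infinite higher even moments, so Lata{\l}a's combinatorics is preceded by a truncation step before the edge-multiplicity/free-index trade-off you describe can be applied termwise.
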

\noindent Note that the original result is phrased for the expectation of $\sigma_1$ and not of $\sigma_1^2$, but actually the proof includes statement (\ref{eqn:lat}).
\begin{proof}[Proof of Theorem 3.] Beforehand note that by distinguishing the cases $r<\frac{M}{2}$ and $r\ge\frac{M}{2}$ it holds
\begin{align*}
r(M-r)\le r_MM\le 2 r(M-r).
\end{align*}
Now we commence the proof of Theorem 3. We get
\begin{align}
\E Z_{\hat\pi_r}\le\E Z^1_{\hat\pi_r^1}+ \E \underset{\tilde\pi_r\in\S_{M,r}}{\sup}Z^2_{\tilde\pi_r}.\label{eqn:hilthe}
\end{align}
We first consider the second summand in (\ref{eqn:hilthe}). By Lemma 1 we have
\begin{align}
\E \underset{\tilde\pi_r\in\S_{M,r}}{\sup}Z^2_{\tilde\pi_r}&=\E \underset{\tilde\pi_r\in\S_{M,r}}{\sup}\text{tr}\left(E^T(\tilde\pi_r-\pi_r)E\right) \notag \\
&\le \sqrt{2r_M} \underset{\tilde\pi_r\in\S_{M,r}}{\sup}||\tilde\pi_r-\pi_r||_{S_2}  \E \sigma_1^2 \label{eqn:hilthe2}
\end{align}
Recall that $||\tilde\pi_r-\pi_r||_{S_2}\le \sqrt{2r_M}$ for any $\tilde\pi_r\in\S_{M,r}$ and 
apply Theorem 4 to (\ref{eqn:hilthe2})
\begin{align}
\E \underset{\tilde\pi_r\in\S_{M,r}}{\sup}Z^2_{\tilde\pi_r}\le 2r_M \E\sigma_1^2&\lesssim r_MM\left(\sigma^2+\sqrt{m_4}\right) \notag\\
&\lesssim r(M-r)\left(\sigma^2+\sqrt{m_4}\right).
\end{align}
Therefore
\begin{align}
\E \underset{\tilde\pi_r\in\S_{M,r}}{\sup}Z^2_{\tilde\pi_r}\lesssim r (M-r) \min (\mathord{\mathrm{I}},~\mathord{\mathrm{II}},~\mathord{\mathrm{III}}).
\end{align}
So it remains to prove that
\begin{align}
\E Z^1_{\hat\pi_r^1}\lesssim r (M-r) \min (\mathord{\mathrm{I}},~\mathord{\mathrm{II}},~\mathord{\mathrm{III}}).
\end{align}
By Proposition 1, monotonicity of integral and Theorem 4 we get
\begin{align*}
\E Z^1_{\hat\pi_r^1}&\le \E Y \\
&\le \min\left(\E\mathord{\mathrm{I}}',~\E\mathord{\mathrm{II}}',~\E\mathord{\mathrm{III}}'\right)\\
&\le \min\left(4r_M\lambda_1\E\sigma_1,~4r_M\frac{\lambda_1^2}{\lambda_r^2-\lambda_{r+1}^2}\E\sigma_1^2,~ \E\max\left(4\sqrt{r_M\Delta_r}\frac{\lambda_1}{\lambda_r}\sigma_1,~8r_M\frac{\lambda_1^2}{\lambda_r^2}\sigma_1^2\right) \right)\\
&\le \min\left(4r_M\lambda_1\E\sigma_1,~ 4r_M\frac{\lambda_1^2}{\lambda_r^2-\lambda_{r+1}^2}\E\sigma_1^2,~ 4\sqrt{r_M\Delta_r}\frac{\lambda_1}{\lambda_r}\E\sigma_1+8r_M\frac{\lambda_1^2}{\lambda_r^2}\E\sigma_1^2 \right)\\
&\lesssim r(M-r) \min\Bigg(\frac{\lambda_1}{\sqrt{M}}\left(\sigma+\sqrt[4]{m_4}\right),~\frac{\lambda_1^2}{\lambda_r^2-\lambda_{r+1}^2}\left(\sigma^2+\sqrt[2]{m_4}\right),\\
&\hspace{3.6cm}~\frac{\lambda_1^2}{\lambda_r^2}\left(\sigma^2+\sqrt{m_4}\right)+\sqrt{\frac{\lambda_1^2\sum_{i=r+1}^{2r}\lambda_i^2}{r(M-r)\lambda_r^2}}\left(\sigma+\sqrt[4]{m_4}\right)\Bigg)\\
&\lesssim r (M-r) \min \left(\mathord{\mathrm{I}},~\mathord{\mathrm{II}},~\mathord{\mathrm{III}}\right).
\end{align*}
\end{proof}
\noindent As mentioned in the introduction, this result is a generalization of Theorem 5.1 of \cite{Rohde2012}. To check this consider the case, where $E$ is a Gaussian matrix and $r\le M/2$. Since the fourth moment of a centered Gaussian random variable is given by $3\sigma^4$, the right-hand side of inequality (\ref{eqn:ErwAbs}) may be rewritten as
\begin{align*}
\sigma^2rM\min\left(1+\frac{\lambda_1}{\sigma\sqrt{M}},~\frac{\lambda_1^2}{\lambda_r^2-\lambda_{r+1}^2},~\frac{\lambda_1^2}{\lambda_r^2}+\sqrt{\frac{\tfrac{1}{r}\sum_{i=r+1}^{2r}\lambda_i^2}{\lambda_r^2}}\frac{\lambda_1}{\sigma\sqrt{M}}\right),
\end{align*} 
where the constant in (\ref{eqn:ErwAbs}) is now specific to Gaussian matrices. So we have to show that
\begin{align*}
&\min\left(\frac{\lambda_1^2}{\lambda_r^2},~1+\frac{\lambda_1}{\sigma\sqrt{M}}\right)+\min\left(\sqrt{\frac{\tfrac{1}{r}\sum_{i=r+1}^{2r}\lambda_i^2}{\lambda_r^2}}\frac{\lambda_1}{\sigma\sqrt{M}},~\frac{\lambda_1^2}{\lambda_r^2-\lambda_{r+1}^2}\right)\\
&\hspace{4cm}\sim \min\left(1+\frac{\lambda_1}{\sigma\sqrt{M}},~\frac{\lambda_1^2}{\lambda_r^2-\lambda_{r+1}^2},~\frac{\lambda_1^2}{\lambda_r^2}+\sqrt{\frac{\tfrac{1}{r}\sum_{i=r+1}^{2r}\lambda_i^2}{\lambda_r^2}}\frac{\lambda_1}{\sigma\sqrt{M}}\right).
\end{align*}
This follows by (\ref{eqn:proEqi1}) and (\ref{eqn:proEqi2}) in the next computation
\begin{align}
&\hspace{-0.3cm}\min\left(\frac{\lambda_1^2}{\lambda_r^2},~1+\frac{\lambda_1}{\sigma\sqrt{M}}\right)+\min\left(\sqrt{\frac{\tfrac{1}{r}\sum_{i=r+1}^{2r}\lambda_i^2}{\lambda_r^2}}\frac{\lambda_1}{\sigma\sqrt{M}},~\frac{\lambda_1^2}{\lambda_r^2-\lambda_{r+1}^2}\right) \notag  \\ 
&\le \min\left(1+\frac{\lambda_1}{\sigma\sqrt{M}}+\sqrt{\frac{\tfrac{1}{r}\sum_{i=r+1}^{2r}\lambda_i^2}{\lambda_r^2}}\frac{\lambda_1}{\sigma\sqrt{M}},
~\frac{\lambda_1^2}{\lambda_r^2}+\frac{\lambda_1^2}{\lambda_r^2-\lambda_{r+1}^2},~\frac{\lambda_1^2}{\lambda_r^2}+\sqrt{\frac{\tfrac{1}{r}\sum_{i=r+1}^{2r}\lambda_i^2}{\lambda_r^2}}\frac{\lambda_1}{\sigma\sqrt{M}}\right) \notag \\
&\le 2 \min\left(1+\frac{\lambda_1}{\sigma\sqrt{M}},~\frac{\lambda_1^2}{\lambda_r^2-\lambda_{r+1}^2},~\frac{\lambda_1^2}{\lambda_r^2}+\sqrt{\frac{\tfrac{1}{r}\sum_{i=r+1}^{2r}\lambda_i^2}{\lambda_r^2}}\frac{\lambda_1}{\sigma\sqrt{M}}\right) \label{eqn:proEqi1}\\
&\le 2 \min\left(1+\frac{\lambda_1}{\sigma\sqrt{M}},~\frac{\lambda_1^2}{\lambda_r^2}+\frac{\lambda_1^2}{\lambda_r^2-\lambda_{r+1}^2},~\frac{\lambda_1^2}{\lambda_r^2}+\sqrt{\frac{\tfrac{1}{r}\sum_{i=r+1}^{2r}\lambda_i^2}{\lambda_r^2}}\frac{\lambda_1}{\sigma\sqrt{M}}\right) \notag \\
&=  2\min\left(1+\frac{\lambda_1}{\sigma\sqrt{M}},~\frac{\lambda_1^2}{\lambda_r^2}+\min\left(\frac{\lambda_1^2}{\lambda_r^2-\lambda_{r+1}^2},~\sqrt{\frac{\tfrac{1}{r}\sum_{i=r+1}^{2r}\lambda_i^2}{\lambda_r^2}}\frac{\lambda_1}{\sigma\sqrt{M}}\right)\right) \notag\\
&\le 2\left(\min\left(\frac{\lambda_1^2}{\lambda_r^2},~1+\frac{\lambda_1}{\sigma\sqrt{M}}\right)
+\min\left(\sqrt{\frac{\tfrac{1}{r}\sum_{i=r+1}^{2r}\lambda_i^2}{\lambda_r^2}}\frac{\lambda_1}{\sigma\sqrt{M}},~\frac{\lambda_1^2}{\lambda_r^2-\lambda_{r+1}^2}\right)\right). \label{eqn:proEqi2}
\end{align}
The last line arises by the simple observation that $\min(a,~b+c)\le \min(a,b)+c$ for any $a,b\in\R$ and $c\ge0$.

\section{Application: Localizing the largest singular value of a deformed random matrix}

\noindent As a further application of Proposition 1 we take a classical view on random matrices. Hence, let $(E_{ij})_{i,j\in\N}$ be a doubly indexed sequence of centered i.i.d. random variables with variance $\sigma^2$ and finite fourth moment. By $(E_M)$ we denote the sequence of $M\times M$ random matrices $1/\sqrt{M}(E_{ij})_{i,j\le M}$. $(C_M)$ is a sequence of deterministic $M\times M$ matrices.  Assume furthermore that the first and the second singular values $\lambda_1(C_M)$ and $\lambda_2(C_M)$ converge to some real numbers $\lambda_1>\lambda_2\ge0$ for $M\to\infty$. We specify an interval containing ${\lim\inf}_{M\to\infty}~\lambda_1(C_M+E_M)$ and ${\lim\sup}_{M\to\infty}~\lambda_1(C_M+E_M)$ almost surely.
\begin{corollary} 
Under the former notations and assumptions let $(u_{i1})_{i\in\N}$ be a sequence of real numbers such that 
\begin{align*}\left(\sum_{i=1}^M u_{i1}^2\right)^{-\frac{1}{2}}(u_{11},...,u_{M1})^T\end{align*} 
is the left singular vector of $C_M$ corresponding to the largest singular value. If there exist $\beta>1,\beta'>0$ and a constant $c>0$, which depends only on $\beta$ and $\beta'$, such that
\begin{align}
\frac{\sum_{i=B}^M u_{i1}^2}{\sum_{i=1}^Mu_{i1}^2}\le  \frac{c}{M^{\beta'}}\text{ for all } M\in\N,~\text{where }B=\lfloor(M^\frac{1}{\beta}-1)^{\beta}\rfloor, \label{eqn:SLC}
\end{align}
then it holds a.s.
\begin{align}
\sqrt{\lambda_1^2+\sigma^2}&\le\underset{M\to\infty}{\lim\inf}~\lambda_1(C^M+E^M)\notag\\
&\le\underset{M\to\infty}{\lim\sup}~\lambda_1(C^M+E^M)\le \sqrt{\lambda_1^2+4\sigma^2+16\sigma^2\frac{\lambda_1^2}{\lambda_1^2-\lambda_2^2}}.\label{eqn:int}
\end{align}
\end{corollary}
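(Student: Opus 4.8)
\emph{Proof idea.} The plan is to apply Proposition~1 with $r=1$ to get a deterministic two‑sided bound on $\lambda_1(C_M+E_M)^2$ and then to pass to almost‑sure limits. Write $X_M:=C_M+E_M$ and let $v_M\in\R^M$, $w_M\in\R^M$ be unit left and right singular vectors of $C_M$ for $\lambda_1(C_M)$, so that the $C_M$‑maximizing rank‑one projection is $\pi_1=v_Mv_M^T$ and $\lambda_1(X_M)^2=||\hat\pi_1X_M||_{S_2}^2=||\pi_1X_M||_{S_2}^2+Z_{\hat\pi_1}$. Expanding the square gives $||\pi_1X_M||_{S_2}^2=||X_M^Tv_M||^2=\lambda_1(C_M)^2+2\lambda_1(C_M)\langle E_Mw_M,v_M\rangle+||E_M^Tv_M||^2=:L_M$, and since $\lambda_1(X_M)^2\ge||\pi_1X_M||_{S_2}^2$ this is a lower bound. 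For the upper bound I would split $Z=Z^1+Z^2$ as in Section~2, use $\sup_{\tilde\pi_1\in\S_{M,1}}||\tilde\pi_1E_M||_{S_2}^2=\sigma_1^2$ so that $\sup_{\tilde\pi_1}Z^2_{\tilde\pi_1}=\sigma_1^2-||\pi_1E_M||_{S_2}^2=\sigma_1^2-||E_M^Tv_M||^2$, note the cancellation of $||E_M^Tv_M||^2$ against the term in $L_M$, and apply Proposition~1 with $r=1$ (so $r_M=1$, $\Delta_1=\lambda_2(C_M)^2$, and $\lambda_1(C_M)>\lambda_2(C_M)$ for all large $M$ since $\lambda_1>\lambda_2$) to bound $Z^1_{\hat\pi_1^1}\le\mathord{\mathrm{II}}'=4\lambda_1(C_M)^2\sigma_1^2/(\lambda_1(C_M)^2-\lambda_2(C_M)^2)$. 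This yields $L_M\le\lambda_1(X_M)^2\le U_M$ with $U_M:=\lambda_1(C_M)^2+2\lambda_1(C_M)\langle E_Mw_M,v_M\rangle+\big(1+4\lambda_1(C_M)^2/(\lambda_1(C_M)^2-\lambda_2(C_M)^2)\big)\sigma_1^2$.

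It then suffices to identify the almost‑sure limits of $\lambda_1(C_M)$, $\lambda_2(C_M)$, $\langle E_Mw_M,v_M\rangle$, $||E_M^Tv_M||^2$ and $\sigma_1^2=\sigma_1(E_M)^2$, to take $\liminf$ in $L_M\le\lambda_1(X_M)^2$ and $\limsup$ in $\lambda_1(X_M)^2\le U_M$, and to take square roots; substituting $\lambda_i(C_M)\to\lambda_i$ (hypothesis), $\langle E_Mw_M,v_M\rangle\to0$, $||E_M^Tv_M||^2\to\sigma^2$, $\sigma_1(E_M)^2\to4\sigma^2$ reproduces exactly the two inequalities of (\ref{eqn:int}). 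Here $\sigma_1(E_M)^2\to4\sigma^2$ is the largest‑singular‑value statement of the Bai--Yin theorem, whose hypothesis is precisely the finiteness of $m_4$ (for the upper bound even $\limsup\sigma_1(E_M)^2\le4\sigma^2$ suffices). The limit $\langle E_Mw_M,v_M\rangle\to0$ is elementary: $\langle E_Mw_M,v_M\rangle=M^{-1/2}\sum_{i,j\le M}E_{ij}(v_M)_i(w_M)_j$ is a centered linear form in the $E_{ij}$ with coefficient array of Euclidean norm $1$, so $\E\langle E_Mw_M,v_M\rangle^2=\sigma^2/M$ and, by a direct fourth‑moment computation using $m_4<\infty$ and $\big(\sum_i(v_M)_i^4\big)\big(\sum_j(w_M)_j^4\big)\le1$, $\E\langle E_Mw_M,v_M\rangle^4\lesssim M^{-2}$; summing over $M$ and applying Borel--Cantelli finishes it.

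The real work is the almost‑sure law of large numbers $||E_M^Tv_M||^2\to\sigma^2$. Setting $S_M:=\sum_{i=1}^Mu_{i1}^2$, one has $||E_M^Tv_M||^2=\tfrac1{MS_M}\sum_{j=1}^M\big(\sum_{i=1}^ME_{ij}u_{i1}\big)^2$, an average over $j$ of variables that are i.i.d.\ for fixed $M$ with mean $\sigma^2S_M$ (so the average has mean $\sigma^2$); the obstruction is that these summands depend on $M$ through $v_M$, so one faces a triangular array and the ordinary strong law does not apply. This is exactly where hypothesis (\ref{eqn:SLC}) enters. I would pass to a sparse subsequence $(m_k)$, growing fast enough (e.g.\ super‑exponentially, $m_k\sim m_{k-1}^{1+\gamma}$ with $0<\gamma<\min(\beta',1)$) that all the series over $k$ that appear converge — possible thanks to $\beta>1$ and $\beta'>0$. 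By (\ref{eqn:SLC}), evaluated at a level $M^*\sim m_{k-1}$ with $B(M^*)=\lfloor((M^*)^{1/\beta}-1)^\beta\rfloor\le m_{k-1}$ (using that $B(M)/M\to1$ with $O(1)$ increments), the tail mass satisfies $\sum_{i>m_{k-1}}u_{i1}^2\lesssim S_{m_{k-1}}m_{k-1}^{-\beta'}$, whence $S_M/S_{m_{k-1}}\to1$ uniformly for $M\in[m_{k-1},m_k)$. For such $M$ I split $\sum_{i\le M}E_{ij}u_{i1}$ into the head $\sum_{i\le m_{k-1}}E_{ij}u_{i1}$, whose terms no longer depend on $M$ so that Kolmogorov's maximal inequality over $j$ applies — here $m_4<\infty$ bounds the variance of a squared head by $\lesssim S_{m_{k-1}}^2$ — and the tail $\sum_{m_{k-1}<i\le M}E_{ij}u_{i1}$, whose contribution is controlled in mean by Doob's maximal inequality for the martingale indexed by the number of rows together with $\sum_{i>m_{k-1}}u_{i1}^2\lesssim S_{m_{k-1}}m_{k-1}^{-\beta'}$; the cross term is then dispatched by Cauchy--Schwarz. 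Borel--Cantelli over $k$ upgrades this to almost‑sure convergence along the whole sequence $M\to\infty$.

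The main obstacle is precisely this last step, the almost‑sure LLN for the triangular‑array quadratic form $||E_M^Tv_M||^2$; condition (\ref{eqn:SLC}) is tailor‑made for it, forcing the singular‑vector profile $(u_{i1})$ to be concentrated on an initial segment with a polynomially small tail, which is what lets one freeze $v_M$ at a fixed truncation over long sub‑blocks and reduce the remaining fluctuations to genuine i.i.d.\ averages amenable to maximal inequalities and Borel--Cantelli. By comparison, Proposition~1, the cancellation giving the clean upper bound, Bai--Yin, and the linear‑form estimate are routine.
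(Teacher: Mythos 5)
Your overall architecture is exactly the paper's: lower bound from $\lambda_1(X_M)^2\ge\|\pi_1X_M\|_{S_2}^2$, upper bound from $Z=Z^1+Z^2$ with $Z^1_{\hat\pi_1^1}\le\mathord{\mathrm{II}}'$ (Proposition 1, $r_M=1$), the cancellation of $\|E_M^Tv_M\|^2$, the Yin--Bai--Krishnaiah limit $\sigma_1\to2\sigma$, an elementary moment bound for the linear form (the paper cites Thrum's SLLN instead, but your fourth-moment/Borel--Cantelli computation is fine and self-contained), and a dedicated strong law for the quadratic form $\|E_M^Tv_M\|^2$, which the paper isolates as its Theorem 5. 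You have also correctly identified that last step as the one place where hypothesis (\ref{eqn:SLC}) is needed.

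However, your treatment of that step contains a genuine gap: the subsequence you choose is incompatible with the way (\ref{eqn:SLC}) can be used. The hypothesis controls only the relative mass of $(u_{i1}^2)$ in the window $[B(M),M]$ with $B(M)=\lfloor(M^{1/\beta}-1)^\beta\rfloor\approx M-\beta M^{1-1/\beta}$; it says nothing about $\sum_{i>M}u_{i1}^2$. Consequently your claim that $\sum_{i>m_{k-1}}u_{i1}^2\lesssim S_{m_{k-1}}m_{k-1}^{-\beta'}$, and hence that $S_M/S_{m_{k-1}}\to1$ uniformly on $[m_{k-1},m_k)$, is false for a super-exponentially growing $(m_k)$: take $u_i\equiv1$ (an admissible sequence, explicitly listed in the paper), for which $S_M/S_{m_{k-1}}=M/m_{k-1}\approx m_{k-1}^{\gamma}\to\infty$ at the right end of the block. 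To get $\sum_{i=m_{k-1}+1}^{M}u_{i1}^2\le\sum_{i=B(M)}^{M}u_{i1}^2\le cS_MM^{-\beta'}$ you need $B(M)\le m_{k-1}$ for all $M$ in the block, which forces block lengths $m_k-m_{k-1}\lesssim m_{k-1}^{1-1/\beta}$, i.e.\ precisely the polynomially spaced subsequence $m_k=\lfloor k^\beta\rfloor$ used in the paper's Theorem 5 (your side remark that $M-B(M)=O(1)$ is also wrong; the gap grows like $M^{1-1/\beta}$). The logic of the hypothesis is therefore the reverse of your plan: the subsequence is dictated by (\ref{eqn:SLC}), and the summability of the Borel--Cantelli series $\sum_k\mathds{P}(|W_{m_k}-\sigma^2|>\varepsilon)\lesssim\sum_k k^{-\beta}$ is then supplied by the assumption $\beta>1$, not by choosing $(m_k)$ to grow fast. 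With the subsequence corrected, the remaining ingredients of your blocking argument (Chebyshev on the diagonal part, variance bounds on the off-diagonal part, interpolation between consecutive $m_k$ via monotonicity of $MS_MW_M$) go through as in the paper.
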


\noindent Thus, if in the large amplitude regime the values $\lambda_1$ and $\lambda_2$ are well-separated, then the largest singular value of $C_M+E_M$ is typically close to $\lambda_1$ but larger. This result can be seen complementary to \cite{Georges2012}. They consider finite rank perturbations of a sequence of random matrices $(X_M)$, where $X_M$ is a $M\times N$-matrix. Under certain assumptions they show an almost sure convergence of the largest singular values in the limit $M,N_M\to\infty$. Since we only make assumptions on the first two singular values and the first left singular vector of the perturbation matrices $(C_M)$, the limit $\lim_{M\to\infty}~\lambda_1(C_M+E_M)$ does not exist in general. \\
Note that if $\lambda_1-\lambda_2\ge 4\sigma$, then the upper bound in Corollary 1 is already better than the bound $\lambda_1+2\sigma$ on $\lim\sup_{M\to\infty}~\lambda_1(C_M+E_M)$. The inequality  $\lim\sup_{M\to\infty}~\lambda_1(C_M+E_M)\le\lambda_1+2\sigma$ holds without any additional structural assumptions on $(C_M)$, since we may use the triangle inequality on the spectral norm and the well-known result by \cite{Krishnaiah} that $\lim_{M\to\infty} \sigma_1= 2\sigma$ a.s.\\
\noindent Before we prove Corollary 1, let us give two examples of sequences $(u_i)_{i\in\N}$ satisfying condition (\ref{eqn:SLC}):
\begin{itemize}
\item All but finitely many $u_i$'s are zero.
\item The sequence is bounded and bounded away from zero. 
\end{itemize}
\begin{proof}[Proof of Corollary 1.] 
Now we give a computation of (\ref{eqn:int}). For this purpose we make a slight abuse of notations. We write $C$ and $E$ for the matrices $C_M$ and $E_M$. Further let $(v_{11},...,v_{M1})$ be the right singular vector of $C_M$ corresponding to $\lambda_1(C_M)$. Consider the lower bound on $\underset{M\to\infty}{\lim\inf}~\lambda_1(C_M+E_M)$: 
\begin{align*}
\lambda_1(C+E)&\ge ||\pi_1(C+E)||_{S_2}= \sqrt{\lambda_1(C)^2+2\text{tr}(C^T\pi_1E)+||\pi_1E||_{S_2}^2}\\
&\hspace{0cm}=\sqrt{\lambda_1(C)^2+2\lambda_1(C)\sum\limits_{i,j=1}^M\frac{v_{i1}u_{j1}}{(\sum_{i=1}^M u_{i1}^2)^{1/2}}E_{ji}+\sum\limits_{i,j,k=1}^M\frac{ u_{i1}u_{j1}}{\sum_{i=1}^M u_{i1}^2}E_{ik}E_{jk}}.
\end{align*}
We use a strong law of large numbers given by Theorem 3 of \cite{Thrum1987}  to get 
\begin{align}\sum\limits_{i,j=1}^M\frac{v_{i1}u_{j1}}{(\sum_{i=1}^M u_{i1}^2)^{1/2}}E_{ji}\overset{a.s.}{\to}0.\label{eqn:hilcor2}\end{align}
Let us check that the left hand side of (\ref{eqn:hilcor2}) actually fulfills the assumptions of Thrum's strong law of large number. Therefore identify the objects therein as follows
\begin{align*}
n:=M^2,~a_{n,i,j}:=\frac{v_{i1}u_{j1}}{(\sum_{i=1}^M u_{i1}^2)^{1/2}}\text{ and }X_{i,j}:=\sqrt{M}E_{ji}.
\end{align*}
Note that we keep the double index. Clearly, the first four moments of $X_{i,j}$ exist and $\sum_{i,j=1}^M a_{n,i,j}^2=1$. Therefore
\begin{align*}
\sum\limits_{i,j=1}^M\frac{v_{i1}u_{j1}}{(\sum_{i=1}^M u_{i1}^2)^{1/2}}E_{ji}=\sum\limits_{i,j=1}^Ma_{n,i,j}X_{i,j}n^{-1/4}\overset{a.s.}{\to}0.
\end{align*}

\noindent Moreover assumption (\ref{eqn:SLC}) allows to use the subsequent Theorem 5 to obtain
\begin{align}\sum\limits_{i,j,k=1}^M\frac{ u_{i1}u_{j1}}{\sum_{i=1}^M u_{i1}^2}E_{ik}E_{jk}\overset{a.s.}{\to}\sigma^2. \label{eqn:hilcor}\end{align}

\noindent We conclude that
\begin{align*}
\sqrt{\lambda_1^2+\sigma^2}\le\underset{M\to\infty}{\lim\inf}~\lambda_1(C_M+E_M)\text{ a.s.}
\end{align*}
It remains to prove the upper bound. Using Proposition 1, one gets
\allowdisplaybreaks{\begin{align*}
\lambda_1(C&+E)=\sqrt{||\hat\pi_1(C+E)||_{S_2}^2-||\pi_1(C+E)||_{S_2}^2+||\pi_1(C+E)||_{S_2}^2}\\
&\le \sqrt{\mathord{\mathrm{II}}'+\sigma_1^2+\lambda_1(C)^2+2\lambda_1(C)\sum\limits_{i,j=1}^M\frac{v_{i1}u_{j1}}{(\sum_{i=1}^M u_{i1}^2)^{1/2}}E_{ji}}\\
&= \sqrt{4\frac{\lambda_1(C)^2}{\lambda_1(C)^2-\lambda_{2}(C)^2}\sigma_1^2+\sigma_1^2+\lambda_1(C)^2+2\lambda_1(C)\sum\limits_{i,j=1}^M \frac{v_{i1}u_{j1}}{(\sum_{i=1}^M u_{i1}^2)^{1/2}}E_{ji}}.
\end{align*}}

\noindent By results of \cite{Krishnaiah} and \cite{Silverstein} we know that the fourth-moment condition on the entries of $E$ is necessary and sufficient for the almost sure convergence of $\sigma_1$ to $2\sigma$. Applying this to the last line of the computation yields the desired claim. 
\end{proof}
\noindent Note that the assumption (\ref{eqn:SLC}) on the first singular vector of $C_M$ is only needed for the lower bound in Corollary 1.\\
\noindent We close this article by a strong law of large numbers for empirical covariance matrices. In this result the empirical covariance matrix is considered as a quadratic form.
\begin{theorem}[SLLN for empirical covariance matrices]
Let $(u_i)_{i\in\N}$ be a sequence of real numbers such that there exist $\beta>1,\beta'>0$ and a constant $c>0$, which may depend on $\beta$ and $\beta'$, with
\begin{align}
\frac{\sum_{i=B}^M u_{i}^2}{\sum_{i=1}^Mu_{i}^2}\le \frac{c}{M^{\beta'}}\text{ for all } M\in\N,~\text{where }B=\lfloor(M^\frac{1}{\beta}-1)^{\beta}\rfloor. \label{eqn:SLL2}
\end{align}
Furthermore let $(E_{ij})_{i,j\in\N}$ be a doubly indexed sequence of centered i.i.d. random variables with variance $\sigma^2$ and finite fourth moment. By $(E_M)$ we denote the sequence of $M\times M$ random matrices $1/\sqrt{M}(E_{ij})_{i,j\le M}$. Then we have
\begin{align}
 Z_M:=\tilde u_M^TE_ME_M^T\tilde u_M\overset{a.s.}{\to}\sigma^2,
\end{align}
where 
\begin{align*}
\tilde u_M:=\left(\sum_{i=1}^Mu_{1}^2\right)^{-\frac{1}{2}}(u_1,...,u_M)^T.
\end{align*}
\end{theorem}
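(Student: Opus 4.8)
The plan is to prove $Z_M\to\sigma^2$ almost surely by a subsequence-and-interpolation argument. Writing $S_M:=\sum_{i=1}^M u_i^2$ and $W_k^{(M)}:=\sum_{i=1}^M u_iE_{ik}$, one has $Z_M=\frac{1}{MS_M}\sum_{k=1}^M\bigl(W_k^{(M)}\bigr)^2$, where the $M$ summands are i.i.d.\ (the columns of $E$ are independent), with common mean $S_M\sigma^2$ and, using only the finite fourth moment, variance $\lesssim(m_4+\sigma^4)S_M^2$ uniformly in $M$. Hence $\E Z_M=\sigma^2$ for every $M$ and $\Var Z_M\lesssim(m_4+\sigma^4)/M$. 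Since $\sum_M M^{-1}=\infty$, a direct Chebyshev/Borel--Cantelli argument along the full sequence is impossible, so a subsequence is forced; the finite fourth moment is also precisely what will later make the per-block error terms summable.

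To begin, I would fix the subsequence $M_n:=\lceil n^{\beta}\rceil$. Because $\beta>1$, $\sum_n\Var Z_{M_n}\lesssim\sum_n n^{-\beta}<\infty$, so Chebyshev's inequality and Borel--Cantelli give $Z_{M_n}\to\sigma^2$ a.s. Write $\delta_n:=\sum_{i=M_n+1}^{M_{n+1}}u_i^2=S_{M_{n+1}}-S_{M_n}$ for the increment of $S$ over block $n$. The exponent in $B=\lfloor(M^{1/\beta}-1)^{\beta}\rfloor$ is matched precisely to the gaps of this subsequence, so that applying (\ref{eqn:SLL2}) at $M=M_{n+1}$ (plus a few smaller scales, to absorb the $O(1)$-many coordinates near $M_n$ that may escape the window $[B(M_{n+1}),M_{n+1}]$) and solving for $\delta_n$ yields
\[
\delta_n\ \lesssim\ M_n^{-\beta'}\,S_{M_n} .
\]
In particular $S_{M_{n+1}}/S_{M_n}\to1$, and since $M_{n+1}/M_n\to1$ as well, $\frac{M_nS_{M_n}}{MS_M}\to1$ uniformly over $M\in(M_n,M_{n+1}]$.

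For the interpolation, fix a block $M_n<M\le M_{n+1}$ and, for $k\le M_n$, write $W_k^{(M)}=W_k^{(M_n)}+V_k(M)$ with $V_k(M):=\sum_{i=M_n+1}^M u_iE_{ik}$. Expanding the squares and separating $k\le M_n$ from $M_n<k\le M$ yields
\[
MS_MZ_M=M_nS_{M_n}Z_{M_n}+2\sum_{k=1}^{M_n}W_k^{(M_n)}V_k(M)+\sum_{k=1}^{M_n}V_k(M)^2+\sum_{k=M_n+1}^{M}\bigl(W_k^{(M)}\bigr)^2 .
\]
Put $Q(M):=\sum_{k\le M_n}V_k(M)^2$ and $R(M):=\sum_{M_n<k\le M}\bigl(W_k^{(M)}\bigr)^2$. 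For each fixed $k$, both $V_k(\cdot)$ and $W_k^{(\cdot)}$ are martingales in the upper summation index (partial sums of independent mean-zero variables), so Doob's $L^2$- and $L^4$-maximal inequalities, together with independence across $k$, give $\E\bigl[\max_{M_n<M\le M_{n+1}}Q(M)\bigr]\lesssim M_n\delta_n$ and $\E\bigl[\max_{M_n<M\le M_{n+1}}R(M)\bigr]\lesssim(M_{n+1}-M_n)S_{M_{n+1}}$, with matching bounds on their variances. Dividing by $M_nS_{M_n}$ and inserting $\delta_n\lesssim M_n^{-\beta'}S_{M_n}$ and $M_{n+1}-M_n\lesssim M_n^{1-1/\beta}$, one checks that the normalized block-maxima of $Q$ and $R$ have expectations tending to $0$ and variances summable in $n$, hence both tend to $0$ a.s.\ by Chebyshev and Borel--Cantelli; the cross term is controlled by Cauchy--Schwarz, $\bigl|\sum_k W_k^{(M_n)}V_k(M)\bigr|\le\sqrt{M_nS_{M_n}Z_{M_n}}\,\sqrt{Q(M)}$, using $Z_{M_n}\to\sigma^2$. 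Dividing the displayed identity by $MS_M$ and combining all of this with the subsequence result gives $Z_M\to\sigma^2$ a.s.

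The main obstacle is the interpolation step: one must control the fluctuation of $Z_M$ over an entire block $(M_n,M_{n+1}]$, not merely at the scales $M_n$, and this is the only place where the structural hypothesis (\ref{eqn:SLL2}) is used. Two points deserve care. First, the combinatorial verification that $B=\lfloor(M^{1/\beta}-1)^{\beta}\rfloor$ really matches the subsequence, i.e.\ that the coordinates $M_n+1,\dots,M_{n+1}$ are, up to $O(1)$ exceptions handled by applying (\ref{eqn:SLL2}) at a few smaller scales, contained in the window $[B(M_{n+1}),M_{n+1}]$, so that one genuinely obtains $\delta_n\lesssim M_n^{-\beta'}S_{M_n}$. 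Second, upgrading the first-moment block bounds to almost-sure statements, which requires the fourth moment of the entries — through the $L^4$-maximal inequality applied to $W_k^{(M)}$ and $V_k(M)$ — in order to make the per-block variances summable in $n$.
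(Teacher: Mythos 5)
Your argument is correct, but it follows a genuinely different route from the paper's. The paper first splits $Z_M=W_M+2X_M$ into the diagonal part ($i=j$) and the off-diagonal part ($i\neq j$): the diagonal part is a nonnegative, monotone (in the unnormalized form) sum, so after proving convergence along $k_n=\lfloor n^{\beta}\rfloor$ it is interpolated by the classical Etemadi sandwich $\frac{k_n\sum_{i\le k_n}u_i^2}{M\sum_{i\le M}u_i^2}W_{k_n}\le W_M\le\frac{k_{n+1}\sum_{i\le k_{n+1}}u_i^2}{M\sum_{i\le M}u_i^2}W_{k_{n+1}}$, with condition (\ref{eqn:SLL2}) forcing the ratios to $1$; the off-diagonal part is mean zero, so a three-way split of the index ranges plus a plain Chebyshev bound at \emph{every} $M$ (the cross-block piece has variance $\lesssim M^{-(1+\beta')}$, summable over all $M$) finishes without any maximal inequality. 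You instead keep $Z_M$ whole as a normalized sum of i.i.d.\ column functionals $\bigl(\sum_i u_iE_{ik}\bigr)^2$ and pay for the absence of monotonicity and of centering by controlling entire blocks $(M_n,M_{n+1}]$ at once via Doob's $L^2$- and $L^4$-maximal inequalities; the decisive point, which you state correctly, is that the normalized block maxima of $Q$ and $R$ have means of order $M_n^{-\beta'}$ and $M_n^{-1/\beta}$ (tending to $0$ but not summable) while their \emph{variances} gain an extra factor $M_n^{-1}$ from independence across $k$, making them summable for every $\beta>1$, $\beta'>0$, so that Chebyshev--Borel--Cantelli applied to the centered maxima closes the argument. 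Both proofs use the subsequence $\sim n^{\beta}$, use (\ref{eqn:SLL2}) only to bound the increment $\delta_n$ of $\sum_iu_i^2$ over a block, and need the fourth moment for summability of per-block variances. What the paper's split buys is the avoidance of maximal inequalities altogether; what your version buys is a single unified decomposition that makes the i.i.d.\ structure of the columns explicit and also yields uniform-over-the-block control of the fluctuations. One cosmetic difference you already flagged: choosing $M_n=\lceil n^{\beta}\rceil$ rather than $\lfloor n^{\beta}\rfloor$ leaves $O(1)$ coordinates outside the window $[B(M_{n+1}),M_{n+1}]$, which you must absorb by applying (\ref{eqn:SLL2}) at the intermediate scales; with the paper's choice of the floor one gets $B(M)\le k_n$ for all $M\le k_{n+1}$ directly.
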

\begin{proof}
Rewrite
\begin{align*}
Z_M=\frac{1}{M\sum_{i=1}^Mu_{i}^2}\sum\limits_{k=1}^M\sum\limits_{i,j=1}^Mu_{i}u_jE_{ik}E_{jk}.
\end{align*}
Therefore $Z_M$ is the sum of $W_M$ and $2X_M$ given by
\begin{align*}
W_M&:=\frac{1}{M\sum_{i=1}^Mu_{i}^2}\sum\limits_{k=1}^M\sum\limits_{i=1}^Mu_{i}^2E_{ik}^2,\\
X_M&:=\frac{1}{M\sum_{i=1}^Mu_{i}^2}\sum\limits_{k=1}^M\sum\limits_{i=1}^M\sum\limits_{j=1}^{i-1}u_{i}u_jE_{ik}E_{jk}.
\end{align*}
First we show that $W_M$ converges to $\sigma^2$ almost surely. This part is an adaption of some arguments of the classical strong law of large numbers (cf. \cite{Etemadi1981}). Here we do not even need truncation arguments, since the entries $E_{ij}$ have finite fourth moments. By the Borel-Cantelli Lemma we get for $k_n:=\lfloor n^\beta \rfloor,~n\in\N$ that $(W_{k_n})$ converges to $\sigma^2$ almost surely as Chebyshev's inequality yields for any $\varepsilon>0$
\begin{align*}
\sum_{n=1}^\infty\P\left(\left|W_{k_n}-\sigma^2\right|>\varepsilon\right)&\le \sum_{n=1}^\infty\frac{\Var(W_{k_n})}{\varepsilon^2}\\
&\le\sum_{n=1}^\infty\varepsilon^{-2}k_n^{-2}\left(\sum_{i=1}^{k_n} u_i^2\right)^{-2}\sum_{k=1}^{k_n}\sum_{i=1}^{k_n}u_i^4\Var(E_{ik}^2)\\
&\le \frac{\E E_{11}^4}{\varepsilon^2}\sum_{n=1}^\infty k_n^{-1}<\infty.
\end{align*} 
For $M\in\N$ pick $n\in\N$ such that $k_n< M\le k_{n+1}$. This implies
\begin{align*}
k_{n}\ge \lfloor(M^\frac{1}{\beta}-1)^{\beta}\rfloor \text{ and }M> \lfloor(k_{n+1}^\frac{1}{\beta}-1)^{\beta}\rfloor.
\end{align*}
Now by monotonicity of $(M\sum_{i=1}^Mu_{i}^2W_M)$ and condition (\ref{eqn:SLL2}) follows
\begin{align*}
\sigma^2&\le \underset{M\to\infty}{\lim\inf}~\frac{k_n\sum_{i=1}^{k_n}u_i^2}{M\sum_{i=1}^Mu_i^2}W_{k_n}
\le\underset{M\to\infty}{\lim\inf}~W_M\\&\le \underset{M\to\infty}{\lim\sup}~W_M
\le\underset{M\to\infty}{\lim\sup}\frac{k_{n+1}\sum_{i=1}^{k_{n+1}}u_i^2}{M\sum_{i=1}^Mu_i^2}  W_{k_{n+1}} \le \sigma^2\text{ a.s.}
\end{align*}
So, $(W_M)$ converges to $\sigma^2$ almost surely.\\
Now consider $(X_M)$. Let $(k_n)$ be as before. Then again by the Borel-Cantelli Lemma $(X_{k_n})$ converges almost surely to $0$. For any $M\in\N$ pick $n\in\N$ again such that $k_n< M\le k_{n+1}$. We have
\begin{align*}
X_M&= \frac{1}{M\sum_{i=1}^Mu_{i}^2}\sum\limits_{k=1}^{k_n}\sum\limits_{i=1}^{k_n}\sum\limits_{j=1}^{i-1}u_{i}u_jE_{ik}E_{jk}\\
	&\hspace{1cm}		+\frac{1}{M\sum_{i=1}^Mu_{i}^2}\sum\limits_{k=1}^{k_n}\sum\limits_{i=k_n+1}^{M}\sum\limits_{j=1}^{i-1}u_{i}u_jE_{ik}E_{jk} \\
	&\hspace{1cm}		+\frac{1}{M\sum_{i=1}^Mu_{i}^2}\sum\limits_{k=k_n+1}^M\sum\limits_{i=1}^M\sum\limits_{j=1}^{i-1}u_{i}u_jE_{ik}E_{jk}
\end{align*}
Clearly, the first and the last term go to zero almost surely. It remains to prove that 
\begin{align*}
V_M:=\frac{1}{M\sum_{i=1}^Mu_{i}^2}\sum\limits_{k=1}^{k_n}\sum\limits_{i=k_n+1}^{M}\sum\limits_{j=1}^{i-1}u_{i}u_jE_{ik}E_{jk}\to 0\text{ a.s.}
\end{align*}
Therefore we estimate $\text{Var}(V_M)$:
\begin{align*}
\text{Var}(V_M)&\le \frac{\sigma^4}{M\left(\sum_{i=1}^Mu_{i}^2\right)^2}\sum\limits_{i=k_n+1}^Mu_{i}^2\cdot \sum\limits_{i=1}^{k_n}u_{i}^2\\
&\le \frac{\sigma^4}{M}\cdot \frac{\sum_{i=k_n+1}^Mu_{i}^2}{\sum_{i=1}^Mu_{i}^2}\\
&\le \frac{\sigma^4}{M}\cdot \frac{\sum_{i=B}^Mu_{i}^2}{\sum_{i=1}^Mu_{i}^2}\\
&\le \frac{c\sigma^4}{M^{1+\beta'}}.
\end{align*}
By the Borel-Cantelli Lemma and Chebyshev's inequality again we conclude the desired claim.
\end{proof}
\noindent If only finitely many entries $u_i$ are non-zero, then the almost sure convergence follows directly from the classical strong law of large numbers, since 
\begin{align*}
Z_M=\frac{1}{M}\sum\limits_{k=1}^M\left(\sum\limits_{i=1}^M \frac{u_{i}}{\sqrt{\sum_{i=1}^Mu_{i}^2}} E_{ik}  \right)^2
\end{align*}
and the summands 
\begin{align*}
\left(\sum\limits_{i=1}^M \frac{u_{i}}{\sqrt{\sum_{i=1}^Mu_{i}^2}} E_{ik}  \right)^2,~M\ge M_0,
\end{align*}
are i.i.d. for $M_0$ large enough.

\subsection*{Acknowledgments}
This work was supported by the {\it Deutsche Forschungsgemeinschaft} research unit 1735, Ro 3766/3-1.\newline
I am grateful to my Ph.D. advisor, Angelika Rohde, for her encouragement and bringing this topic to my attention.

\printbibliography[heading=head]
\vspace{0.5cm}
\tiny
\noindent RUHR-UNIVERSIT\"AT BOCHUM, FAKULT\"AT F\"UR MATHEMATIK, 44780 BOCHUM, GERMANY \\
\noindent E-MAIL: kamil.jurczak@ruhr-uni-bochum.de

\end{document}                                                                                                                                                                                                                                                                                                                                                                                                                                                                                                                                                                                                                                                                                                                                                                                                                                                                                                                                                                                                                                                                                                                                                                                                                                                                                                                                                                                                                                                                                                                                                                                                                                                                                                                                                                                                                                                                                                                                                                                                                                                                                                                                                                                                                                                                                                                                                                                                                                                                                                                                                                                                                                                                                                                                                                                                                                                                                                                                                                                                                                                                                                                                                                                                                                                                                                                                                                                                                                                                                                                                                                                                                                                                                                                                                                                                                                                                                                                                                                                                                                                                                                                                                                                                                                                                                                                                                                                                                                                                                                                                                                                                                                                                                                                                                                                                                                                                                                                                                                                                                                                                                                                                                                                                                                                                                                                                                                                                                                                                                                                                                                                                                                                                                                                                                                                                                                                                                                                                                                                                                                                                                                                                                                                                                                                                                                                                                                                                                                                                                                                                                                                                                                                                                                                                                                                                                                                                                                                                                                                                                                                                                                                                                                                                                                                                                                                                                                                                                                                                                                                                                                                                                                                                                                                                                                                                                                                                                                                                                                                                                                                                                                                                                                                                                                                                                                                                                                                                                                                                                                                                                                                                                                                                                                                                                                                                                                                                                                                                                                                                                                                                                                                                                                                                                                                                                                                                        